\theoremstyle{plain}
\newtheorem{remark}{Remark}[section]
\newtheorem{example}{Example}
\newtheorem{theorem}{Theorem}[section]
\newtheorem{corollary}{Corollary}[section]
\renewcommand{\abstract}{\textbf{Abstract. }}
\begin{document}

\title{\bf On optimal recovery of unbounded operators \\
from inaccurate data}


\author{Oleg Davydov\thanks{Department of Mathematics,
	University of Giessen, Arndtstrasse 2, 35392 Giessen, Germany,
	\url{oleg.davydov@math.uni-giessen.de}}\and
	Sergei Solodky\thanks{Institute of Mathematics, National Academy of Sciences of Ukraine,
    Tereshchenkivska Str. 3, 01601, Kyiv, Ukraine, and
    Department of Mathematics,
	University of Giessen, Arndtstrasse 2, 35392 Giessen, Germany,
    \url{solodky@imath.kiev.ua}} }


\date{May 11, 2025}

\maketitle

\begin{abstract}
The problems of optimal recovery of unbounded operators are studied.
Optimality means the highest possible accuracy and the minimal amount of discrete information involved.
It is established that the truncation method, when certain conditions are met, realizes the optimal values of the studied quantities.
As an illustration of the general results, problems of numerical differentiation and the backward parabolic equation are considered.
\end{abstract}

\begin{keywords}
ill-posed problems; regularization; truncation method; optimal recovery;
information complexity; backward parabolic equation
\end{keywords}

\section{Introduction}
This study is devoted to the optimization of approximate methods for the recovery of values
of unbounded operators in the case of inaccurate input data.
It is well known that such problems are unstable and require regularization when constructing approximations.
It is quite natural that optimization was first studied for well-posed problems,
in particular, in approximation theory.
To date, many optimal methods have been developed for such problems
(see, for example, \cite{MR}, \cite{LGM}, \cite{Tikh}).
Separately, we should highlight the theory of information complexity \cite{TWW}, \cite{TW}, \cite{NW1}, \cite{NW2}, \cite{NW3},
where complexity is understood as the smallest amount of discrete information necessary
to solve a problem with a given accuracy.
As it turned out when solving well-posed problems, introducing noise
into the input data does not lead to fundamental changes.
Therefore, for such problems, it is not customary to consider models with perturbed input information.

As for ill-posed problems, the issue of optimizing approximate methods
for solving them has been studied much less.
Here, article \cite{St} can be considered the starting point, where the optimal recovery
of unbounded operators by bounded operators was studied (for more details, see \cite{Ar96}).
As it turned out later, it is convenient to evaluate the accuracy of solving ill-posed problems
through the error of the input data, using the smoothness of the desired solution,
expressed using the so-called source condition.
Among the works in this area we can highlight \cite{Bak78}, \cite{VV86},
where moderately ill-posed problems with the source condition in the form of a power function were considered.
These studies were later generalized to different types of ill-posed problems in \cite{NPT}.
The first results on the information complexity of ill-posed problems were related
to the minimal radius of Galerkin information (see, for example, \cite{PS}, \cite{MS}),
where sharp estimates were found for the optimal accuracy of solving Fredholm integral equations of the first kind,
if discrete information is understood as the perturbed values of the Fourier coefficients of the operator kernel
and the right-hand side of the problem.
The most general results on the information complexity of ill-posed problems in Hilbert spaces
were obtained in \cite{MP}, where optimization was carried out using various algorithms dealing
with values of linear functionals as discrete information.

The research of this study can be considered as a continuation of \cite{MP}
for algorithms dealing with values of continuous (but not necessarily linear) functionals.
In addition, we will study the case when the approximation error is estimated in the metrics of Banach spaces.

It should be note that to solve the problem optimal recovery, we propose a truncation method.
At present, this method is being intensively studied by many researchers and has proven itself well in solving various types of
ill-posed problems (see, for example, \cite{Mathe&Pereverzev_2002_JAT}, \cite{Qian&Fu&Xiong&Wei_2006}, \cite{Zhao_2010}, \cite{LNP},
\cite{Solodky&Sharipov_2015_P116_124}, \cite{SSS_CMAM}).
In the truncation method the stability to small perturbations in input data and the required accuracy level are achieved by properly choosing the discretization parameter, which serves as a regularization parameter here.

The article is organized as follows. In Section \ref{probset},
the problem statement for optimal recovery of unbounded operators in a Hilbert space is given.
Section \ref{below} establishes lower bounds for the error of optimal recovery.
In Section \ref{ertru}, the accuracy estimates for the truncation method were established.
Section \ref{opttru} is devoted to discussing the optimality of the truncation method.
Section \ref{banach} considers the problem of optimal recovery in the metric of Banach spaces.
Section \ref{ND&BPE} studies the problems of optimal recovery in the numerical differentiation and the backward
parabolic equation.

\section{Problem setting} \label{probset}
Let us proceed to the problem setting.
Let $H$ be a real Hilbert space and $A: D(A)\to H$ an unbounded positive self-adjoint linear operator with eigenvalues
$\{\mu_k\}_{k=0}^\infty$  ordered such that
\begin{equation}  \label{mu}
0 < \mu_0 \le \mu_1 \le \ldots \le \mu_k \le \ldots , \qquad \lim_{k\to \infty} \mu_k = \infty ,
\end{equation}
and let $\{w_k\}_{k=0}^\infty$ be the corresponding orthonormal eigenbasis. Then for any $f\in D (A)$ we have
$$
f = \sum_{k=0}^\infty \langle f , w_{k} \rangle w_k,\qquad
A f = \sum_{k=0}^\infty \mu_k \langle f , w_{k} \rangle w_k,
$$
with
$$
D (A) = \Big\{ f\in H: \sum_{k=0}^\infty \mu_k^2 \langle f , w_{k} \rangle^2 < \infty\Big\} \neq H .
$$
In addition, we consider a subset $W$ of $D(A)$ defined by
\begin{equation} \label{W}
W = \Big\{ f\in H: \sum_{k=0}^\infty \xi_k^2 \langle f , w_{k} \rangle^2 < \infty\Big\} ,
\end{equation}
where $\{\xi_k\}_{k=0}^\infty$ is a nondecreasing sequence such that
\begin{equation}  \label{xi}
0 < \xi_0 \le \xi_1 \le \ldots \le \xi_k \le \ldots , \qquad \lim_{k\to \infty} \xi_k = \infty ,
\end{equation}
and
\begin{equation} \label{lim}
\lim_{k\to \infty} \frac{\mu_k}{\xi_k} = 0 .
\end{equation}
Then $W$ is a Hilbert space with inner product
$$
\langle f , g \rangle_{_W} := \sum_{k=0}^\infty \xi_k^2 \langle f , w_{k}\rangle \langle g , w_{k} \rangle
$$
and corresponding norm $\|f\|_{_W} = \langle f , f\rangle_W^{1/2}$.

Assume that instead of $f$ we are given inaccurate information in the form of $f^\delta\in H$ with
$\|f-f^\delta\|_{H}\leq \delta$, and we are looking for a mapping $\Phi$ that recovers $Af$ using $f^\delta$ with the highest
possible accuracy.

Let $G: H\to {\mathbb R}^N$ be a continuous information map and $\Psi: {\mathbb R}^N\to H$ a recovery algorithm.
Given any input error level $\delta>0$, we consider the worst case error of the recovery of $Af$ by the algorithm $\Psi$
from the information vector $G(f^\delta)$, where $f^\delta\in H$ represents a $\delta$-pertubation of $f\in W$,
$$
\varepsilon_\delta (A, W, G, \Psi)_H :=
\sup_{\substack{f\in W , \\ \|f\|_{_W}\leq 1}}\quad
\sup_{\substack{f^\delta\in H , \\ \|f-f^\delta\|_{_H}\leq \delta}}
\|Af - \Psi(G(f^\delta))\|_H
$$
and the 
\emph{$(N,\delta)$-optimal recovery error},
$$
R_{N,\delta} (A, W)_H :=
\inf_{\substack{G: H\to {\mathbb R}^N \\ \rm{(continuous\, map)}}}\quad
\inf_{\substack{\Psi: {\mathbb R}^N\to H \\ \rm{(arbitrary\, map)}}}
\varepsilon_\delta (A, W, G, \Psi)_H .
$$

For any given error level $\delta>0$, the optimal recovery error cannot get arbirarily small when $N\to\infty$. Therefore we
also consider the \emph{$\delta$-optimal recovery error}
$$
R_\delta (A, W)_H := \inf_N R_{N,\delta} (A, W)_H.
$$

We provide two typical examples of operators $A$ for which the problem of optimal recovery is of interest.

\begin{example}\rm  \label{ex1}
Let $H$ be the Hilbert space $L_{2\pi,0}^2$ of $2\pi$-periodic square integrable functions with zero mean.
Let $A_1$ be the operator
\begin{equation}  \label{prex1}
A_1f = - f'' .
\end{equation}
Then
$$
D (A_1) = \Big\{ f\in L_{2\pi,0}^2: \sum_{k=0}^\infty \mu_k^2 \langle f , w_{k} \rangle^2 < \infty\Big\}
$$
with
$$
w_{2k} = \cos (k+1)x,\quad
w_{2k+1} = \sin (k+1)x,\quad k=0,1,2,\ldots,
$$
and
$$
\mu_{2k}=\mu_{2k+1} = (k+1)^2,\quad k=0,1,2,\ldots.
$$
Consider the problem of numerical differentiation formulated as optimal recovery of $A_1f$ for $f\in W$, where
$$
W=\Big\{ f\in L_{2\pi,0}^2: \sum_{k=0}^\infty \underline{k}^{2\gamma} \langle f , w_{k} \rangle^2 < \infty\Big\}
$$
with some $\gamma>2$ and $\underline{k}:=\max\{1,k\}$,\ $k=0,1,2,\dots$.
Then $W$ has the form \eqref{W} with $\xi_k=\underline{k}^\gamma$ that satisfies \eqref{xi} and \eqref{lim}.
\end{example}

\begin{example}\rm  \label{ex2}
Let $H$ be an arbitrary Hilbert space and $L$ an unbounded self-adjoint operator $L: D(L)\to H$
with eigenvalues $\lambda_k$ and eigenbasis $\{w_k\}_{k=0}^\infty$ of $H$,
$0 \le \lambda_0 \le \lambda_1 \le \cdots \le \lambda_k \le \cdots$, $\lambda_k\to \infty$. Let
$$
A_2 f = \sum_{k=0}^\infty \langle f , w_{k} \rangle e^{\lambda_k t} w_k ,
$$
where $t>0$ is fixed.
That is, $A_2f$ is the solution $u(t)$ of the backward parabolic equation
\begin{equation}  \label{prex2}
\frac{d u}{d t} - L u = 0, \qquad u(0) = f .
\end{equation}
Hence, $\mu_k=e^{\lambda_k t}$. We may choose $\xi_k=\underline{k}^\mu e^{\lambda_k T}$ for some $\mu\ge 0$, $T\ge t$.
\end{example}

\section{Estimate of optimal recovery error from below}\label{below}

For any $\delta>0$, we set
\begin{equation}  \label{N_delta}
N_\delta:=\min\{N\ge 0:\xi_N \ge 1/\delta\} .
\end{equation}

\begin{theorem}  \label{th1}
For any $\delta$,
\begin{equation} \label{R_delta}
R_\delta (A, W)_H \ge \frac{\mu_{N_\delta}}{\xi_{N_\delta}}.
\end{equation}
Moreover, for any $\delta$ and $N$,
\begin{equation} \label{est_bel}
R_{N,\delta} (A, W)_H \ge \max \Big\{ \frac{\mu_{N_\delta}}{\xi_{N_\delta}},\;
\min_{0\le k\le N}
\frac{\mu_k}{\xi_k}\Big\} .
\end{equation}
\end{theorem}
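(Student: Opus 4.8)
The plan is to derive both estimates from a single \emph{indistinguishability principle}. Suppose two admissible inputs $f_1,f_2\in W$ with $\|f_i\|_{_W}\le 1$ admit perturbations $f_i^\delta\in H$, $\|f_i-f_i^\delta\|_{_H}\le\delta$, producing the same information $G(f_1^\delta)=G(f_2^\delta)$. Then $\Psi$ cannot separate them: it returns a single value $g:=\Psi(G(f_1^\delta))$, whence
\[
\varepsilon_\delta(A,W,G,\Psi)_H\ge\max_{i=1,2}\|Af_i-g\|_H\ge\tfrac12\|A(f_1-f_2)\|_H,
\]
the last step by $\|Af_1-g\|_H+\|Af_2-g\|_H\ge\|A(f_1-f_2)\|_H$. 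Since this holds for every $G$ and $\Psi$, it bounds $R_{N,\delta}(A,W)_H$ from below, and it suffices to exhibit, for each target quantity, a pair $(f_1,f_2)$ realizing it.

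For the bound $\mu_{N_\delta}/\xi_{N_\delta}$ I would take $f_1=w_{N_\delta}/\xi_{N_\delta}$ and $f_2=-f_1$. Then $\|f_i\|_{_W}=1$, and since $\xi_{N_\delta}\ge 1/\delta$ by \eqref{N_delta} we have $\|f_i\|_{_H}=1/\xi_{N_\delta}\le\delta$; hence $f_i^\delta:=0$ is admissible for both and gives the common information $G(0)$. The principle yields $\varepsilon_\delta\ge\|Af_1\|_H=\mu_{N_\delta}/\xi_{N_\delta}$. Crucially this construction uses no property of $N$, so the same bound holds for every $N$; taking the infimum over $N$ gives \eqref{R_delta}, and it also supplies the first entry of the maximum in \eqref{est_bel}.

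For the bound $\min_{0\le k\le N}\mu_k/\xi_k$ I would exploit the finite dimension of the information. Set $V=\operatorname{span}\{w_0,\dots,w_N\}$, an $(N+1)$-dimensional space, and let $\Sigma=\{h\in V:\|h\|_{_W}=1\}$, which is antipodally homeomorphic to $S^N$ via the odd radial map. The map $h\mapsto G(h)-G(-h)$ is continuous and odd from $\Sigma\cong S^N$ into $\mathbb{R}^N$, so by the Borsuk--Ulam theorem it has a zero $h^\ast\in\Sigma$, i.e. $G(h^\ast)=G(-h^\ast)$. Taking $f_1=h^\ast$, $f_2=-h^\ast$ with exact data $f_i^\delta:=f_i$ (admissible since the perturbation is zero), the principle gives $\varepsilon_\delta\ge\|Ah^\ast\|_H$. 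Writing $h=\sum_{k=0}^N c_kw_k$ and $d_k=\xi_kc_k$, for every $h\in\Sigma$ one has
\[
\|Ah\|_H^2=\sum_{k=0}^N\Big(\tfrac{\mu_k}{\xi_k}\Big)^2 d_k^2\ge\Big(\min_{0\le k\le N}\tfrac{\mu_k}{\xi_k}\Big)^2\sum_{k=0}^N d_k^2=\Big(\min_{0\le k\le N}\tfrac{\mu_k}{\xi_k}\Big)^2\|h\|_{_W}^2,
\]
so $\|Ah^\ast\|_H\ge\min_{0\le k\le N}\mu_k/\xi_k$, the second entry of the maximum in \eqref{est_bel}.

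The main obstacle is the second bound, precisely because $G$ is assumed only continuous, not linear, so a naive $N$-dimensional kernel argument is unavailable. The device overcoming this is Borsuk--Ulam applied on the $W$-unit sphere of $V$, chosen so that the coercivity estimate $\|Ah\|_H\ge(\min_{k\le N}\mu_k/\xi_k)\|h\|_{_W}$ holds \emph{uniformly} on $\Sigma$; this makes the exact location of the Borsuk--Ulam zero irrelevant. The only technical points to verify are that $\Sigma$ is antipodally homeomorphic to $S^N$ and that $G|_\Sigma$ is continuous, both immediate here.
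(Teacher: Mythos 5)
Your proposal is correct and follows essentially the same route as the paper's proof: the same test elements $\pm\xi_{N_\delta}^{-1}w_{N_\delta}$ with common perturbation $0$ for the first bound, and Borsuk's antipodal theorem applied to the information map on the $W$-unit sphere of $\mathrm{span}\{w_0,\dots,w_N\}$ (the paper parametrizes it as $f_x$, $x\in S^N$) for the second, combined with the same triangle-inequality indistinguishability argument and the same coercivity estimate $\|Ah\|_H\ge\big(\min_{0\le k\le N}\mu_k/\xi_k\big)\|h\|_{_W}$. The only differences are presentational (stating the indistinguishability step once as a principle, and using the odd-map-has-a-zero form of Borsuk--Ulam rather than the antipodal-coincidence form).
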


\begin{proof} We first choose
$$
f_* = \xi_{N_\delta}^{-1} w_{N_\delta} .
$$
Then $f_*\in W$, with
$$
\|f_*\|_{_W} = 1,\quad \|f_*\|_{_H} = \xi_{N_\delta}^{-1},\quad \|Af_*\|_{_H} = \frac{\mu_{N_\delta}}{\xi_{N_\delta}} .
$$
Since $\|f_*\|_H \le \delta$, the element $f_*^\delta=0$ is a $\delta$-perturbation of both $f_*$ and $-f_*$. Given
$\Phi:H\to H$ (no linearity, no continuity assumed),
let $g:= \Phi(0) = \Phi(f_*^\delta) = \Phi(-f_*^\delta)$.

We have
$$
2 \|Af_*\|_H = \|A(-f_*) - Af_*\|_H \le \|A(-f_*) - g\|_H + \|Af_* - g\|_H .
$$
Hence
$$
\sup_{\substack{f\in W , \\ \|f\|_{_W}\leq 1}}\quad
\sup_{\substack{f^\delta\in H , \\ \|f-f^\delta\|_{_H}\leq \delta}}
\|Af - \Phi(f^\delta)\|_H \ge \min\big\{ \|A(-f_*) - g\|_H, \|Af_* - g\|_H\big\} \ge \|Af_*\|_H
= \frac{\mu_{N_\delta}}{\xi_{N_\delta}},
$$
and \eqref{R_delta} follows.

For the second part consider the $N$-dimensional sphere
$$
S^N := \Big \{ x\in {\mathbb R}^{N+1} : \sum_{k=0}^{N} x_k^2  = 1\Big \},
$$
and for any $x\in S^N$ define
$$
f_x = \sum_{k=0}^{N} \frac{x_k}{\xi_k} w_k \in W,
$$
with
$$
\|f_x\|_W^2 = \sum_{k=0}^{N} \frac{x_k^2}{\xi_k^2} \xi_k^2 = 1 .
$$
For any continuous map $G: H\to {\mathbb R}^{N}$ define continuous $F: S^N\to {\mathbb R}^{N}$
by $F(x) := G(f_x)$.
By Borsuk's antipodal theorem there exists $x\in S^N$ such that
$F(x)=F(-x)$, that is
$$
G(f_x) = G(-f_x) = G(f_{-x}) .
$$
Let $f_x^\delta:=f_x$, $f_{-x}^\delta:=f_{-x}$. For any $\Psi: {\mathbb R}^{N}\to H$, set
$$
g :=  \Psi(G(f_x^\delta)) = \Psi(G(f_x)) = \Psi(G(f_{-x})) = \Psi(G(f_{-x}^\delta)) .
$$
Then $Af_x = -Af_{-x}$ and hence
$$
2 \|Af_x\|_H = \|Af_x - Af_{-x}\|_H \le \|Af_x - g\|_H + \|Af_{-x} - g\|_H ,
$$
which implies
$$
\varepsilon_\delta (A, W, G, \Psi)_H^2 \ge \|Af_x\|_H^2
= \sum_{k=0}^{N} \frac{\mu_k^2}{\xi_k^2} x_k^2  \ge
\min_{0\le k\le N} \frac{\mu_k^2}{\xi_k^2} \; \sum_{k=0}^{N} x_k^2
= \min_{0\le k\le N} \frac{\mu_k^2}{\xi_k^2} .\qedhere
$$

\end{proof}

\section{Recovery error of truncation method}\label{ertru}

We now give a formula for the recovery error of the \emph{truncation method} defined by the information map $\tilde G_N$ and
recovery algorithm $\tilde \Psi_N$, where
\begin{equation}  \label{tm1}
\tilde G_N(f) := [\langle f , w_{k} \rangle]_{k=0}^{N-1} , \qquad
\tilde \Psi_N(x) := \sum_{k=0}^{N-1} \mu_k x_k w_k .
\end{equation}
Hence,
\begin{equation}  \label{tm2}
\tilde \Psi_N(\tilde G_N(f^\delta)) = AS_N(f^\delta) =
\sum_{k=0}^{N-1} \mu_k\langle f^\delta , w_{k} \rangle  w_k ,
\end{equation}
where $S_N(f)$ 
denotes the $N$-th partial sum of the series
$f=\sum_{k=0}^\infty \langle f, w_{k} \rangle  w_k$.

The following statement contains an accuracy estimate for the truncation method (\ref{tm1}).

\begin{theorem}  \label{th2}
For any $\delta$ and $N$ the recovery error of the truncation method (\ref{tm1}) is given by
\begin{equation} \label{ep_eq}
\varepsilon_\delta (A, W, \tilde G_N, \tilde \Psi_N)_H =
\Big(\max_{k\ge N} \frac{\mu_k^2}{\xi_k^2} + \delta^2 \mu_{N-1}^2\Big)^{1/2} .
\end{equation}
Hence,
\begin{equation}\label{tdo}
\inf_N\varepsilon_\delta (A, W, \tilde G_N, \tilde \Psi_N)_H
\le \varepsilon_\delta (A, W, \tilde G_{N_\delta}, \tilde \Psi_{N_\delta})_H <
\Big(\frac{\mu_{N_{\delta-1}}^2}{\xi_{N_{\delta-1}}^2} + \max_{k\ge N_\delta} \frac{\mu_k^2}{\xi_k^2}\Big)^{1/2} .
\end{equation}
\end{theorem}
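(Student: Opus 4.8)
The plan is to evaluate the error exactly in the eigenbasis and then to separate the contribution of the data perturbation from that of the discarded tail. Writing $a_k := \langle f, w_k\rangle$ and $b_k := \langle f^\delta, w_k\rangle$, the identity \eqref{tm2} gives
$$
\|Af - \tilde\Psi_N(\tilde G_N(f^\delta))\|_H^2
= \sum_{k=0}^{N-1} \mu_k^2 (a_k-b_k)^2 + \sum_{k=N}^\infty \mu_k^2 a_k^2 ,
$$
subject to the constraints $\sum_k \xi_k^2 a_k^2 \le 1$ (from $\|f\|_W\le 1$) and $\sum_k (a_k-b_k)^2 \le \delta^2$ (from $\|f-f^\delta\|_H\le \delta$).

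I would take the two suprema in the order dictated by the definition of $\varepsilon_\delta$. For fixed $f$, the inner supremum over $f^\delta$ affects only the first sum; since $\{\mu_k\}$ is nondecreasing by \eqref{mu}, placing the whole perturbation budget on the index $N-1$ (i.e.\ $a_{N-1}-b_{N-1}=\pm\delta$, all other coordinates unperturbed) maximizes it, giving the value $\delta^2\mu_{N-1}^2$, independent of $f$. The outer supremum over $\|f\|_W\le 1$ then acts only on the tail: rewriting $\sum_{k\ge N}\mu_k^2 a_k^2 = \sum_{k\ge N}(\mu_k^2/\xi_k^2)(\xi_k^2 a_k^2)$ and spending the unit budget $\sum_{k\ge N}\xi_k^2 a_k^2\le 1$ on the index attaining $\max_{k\ge N}\mu_k^2/\xi_k^2$ yields exactly this maximum; the maximum is attained because $\mu_k/\xi_k\to 0$ by \eqref{lim}. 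Summing the two contributions gives precisely the right-hand side of \eqref{ep_eq}, and the choices above realize this value, so equality holds rather than a mere bound.

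The inequality \eqref{tdo} then follows by specializing \eqref{ep_eq} to $N=N_\delta$: the first bound is immediate, $N_\delta$ being one admissible choice in the infimum over $N$. For the strict inequality, the definition \eqref{N_delta} gives $\xi_{N_\delta-1}<1/\delta$, hence $\delta\,\xi_{N_\delta-1}<1$ and therefore $\delta^2\mu_{N_\delta-1}^2 < \mu_{N_\delta-1}^2/\xi_{N_\delta-1}^2$; replacing the perturbation term in \eqref{ep_eq} by this strictly larger quantity produces the stated bound.

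The only step needing genuine care is the clean decoupling of the two suprema. It works because the data perturbation influences the reconstruction solely on the retained indices $k<N$, where the operator is applied, whereas the smoothness class $W$ constrains the error solely on the discarded indices $k\ge N$; the two act on disjoint coordinate blocks, so their worst cases are realized simultaneously by a single pair $(f,f^\delta)$ — for instance $f=\xi_{k_0}^{-1}w_{k_0}$ with $k_0$ attaining the tail maximum and $f^\delta = f-\delta w_{N-1}$. (The degenerate case $N=0$, where the first sum is empty, is covered by the convention that the perturbation term vanishes; for $\delta<1/\xi_0$ one has $N_\delta\ge 1$ and \eqref{tdo} applies as written.)
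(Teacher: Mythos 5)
Your proof is correct and follows essentially the same route as the paper's: the same orthogonal splitting of the error into the perturbation block ($k<N$) and the tail block ($k\ge N$), the same worst-case choices (perturbation concentrated on $w_{N-1}$, $f$ concentrated on the index attaining $\max_{k\ge N}\mu_k/\xi_k$, which exists by \eqref{lim}), and the same use of $\delta<\xi_{N_\delta-1}^{-1}$ for the strict inequality in \eqref{tdo}. The only difference is organizational (you compute the two suprema sequentially to get equality in one pass, while the paper proves the two inequalities separately), and your handling of the edge cases $N=0$ and $N_\delta=0$ is a small bonus the paper omits.
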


\begin{proof}
We have for any $f\in W$ with $\|f\|_W \le 1$ and $f^\delta\in H$ with $\|f-f^\delta\|_H\le\delta$,
$$
\|Af - \tilde\Psi_N(\tilde G_N(f^\delta))\|^2_H = \|Af - S_N(Af^\delta)\|_H^2 = \sigma_1 + \sigma_2 ,
$$
where
$$
\sigma_1 := \Big\|Af - \sum_{k=0}^{N-1} \mu_k\langle f , w_k\rangle  w_k\Big\|_H^2 =
\Big\|\sum_{k=N}^\infty \mu_k\langle f , w_k\rangle  w_k\Big\|_H^2 = \sum_{k=N}^\infty \mu_k^2\langle f , w_k\rangle^2,
$$
$$
\sigma_2 := \Big\|\sum_{k=0}^{N-1} \mu_k\langle f - f^\delta , w_k\rangle  w_k\Big\|_H^2 = \sum_{k=0}^{N-1} \mu_k^2\langle f -
f^\delta, w_k\rangle^2.
$$
Now
$$
\sigma_1
\le \|f\|^2_W \max_{k\ge N} \frac{\mu_k^2}{\xi_k^2}
\le \max_{k\ge N} \frac{\mu_k^2}{\xi_k^2} ,
$$
and
$$
\sigma_2
\le \delta^2 \max_{k\le N-1} \mu_k^2 = \delta^2 \mu_{N-1}^2 ,
$$
which implies
$$
\varepsilon_\delta (A, W, \tilde G_N, \tilde \Psi_N)_H \le
\Big(\max_{k\ge N} \frac{\mu_k^2}{\xi_k^2} + \delta^2 \mu_{N-1}^2\Big)^{1/2}.$$

To show the opposite inequality in \eqref{ep_eq} we choose $\ell\ge N$ such that
$$
\frac{\mu_\ell^2}{\xi_\ell^2} = \max_{k\ge N} \frac{\mu_k^2}{\xi_k^2} .
$$
Consider $f_2=\xi_\ell^{-1} w_\ell$, $f_2^\delta=f_2+\delta w_{N-1}$.
Then $f_2\in W$, $\|f_2\|_W=1$, $\|f_2-f_2^\delta\|_H=\delta$,
hence,
$$
\varepsilon_\delta (A, W, \tilde G_N, \tilde \Psi_N)_H^2 \ge
\|Af_2 - S_N(Af_2^\delta)\|_H^2 = \sigma_1 + \sigma_2
= \frac{\mu_\ell^2}{\xi_\ell^2} + \delta^2 \mu_{N-1}^2 .
$$

The estimate \eqref{tdo} follows because $\delta<\xi_{N_{\delta}-1}^{-1}$ by \eqref{N_delta}.
\end{proof}

\section{Optimality of truncation method}\label{opttru}
As is known, problems with unbounded operators are unstable in the case of inaccurate input data.
In other words, for such problems, small perturbations of the input data may lead to a large divergence
between the solutions of the exact and perturbed problems.
In such cases, regularization techniques are required to ensure the stability of computations.
Note the process of regularization in (\ref{tm1}) consists of choosing the discretization parameter $N$ depending
on the error value $\delta$ and parameters $\mu_k, \xi_k$ such that to minimize the error of the method.
Thus, regularization allows not only the convergence of approximations to be achieved but also
guarantees the optimal solution accuracy.
The results of this section are devoted to the study of the optimality of the truncation method.

Theorems~\ref{th1} and \ref{th2} imply the following double estimates  of the optimal and $\delta$-optimal recovery errors,
where the upper bounds are obtained with the help of the truncation method.

\begin{corollary} For any $\delta$,
\begin{equation} \label{dobd}
\frac{\mu_{N_\delta}}{\xi_{N_\delta}}
\le R_{\delta} (A, W)_H \le R_{N_\delta,\delta} (A, W)_H <
\Big(\frac{\mu_{N_{\delta}-1}^2}{\xi_{N_{\delta}-1}^2} + \max_{k\ge N_\delta} \frac{\mu_k^2}{\xi_k^2}\Big)^{1/2},
\end{equation}
and for any $\delta$ and $N$,
\begin{equation} \label{dob}
\max \Big\{ \frac{\mu_{N_\delta}}{\xi_{N_\delta}},\;
\min_{k\le N} \frac{\mu_k}{\xi_k}\Big\}
\le R_{N,\delta} (A, W)_H \le
\Big(\max_{k\ge N} \frac{\mu_k^2}{\xi_k^2} + \delta^2 \mu_{N-1}^2\Big)^{1/2}.
\end{equation}
\end{corollary}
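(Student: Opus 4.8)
The plan is to observe that this Corollary is a direct synthesis of Theorems~\ref{th1} and \ref{th2}, combined with two elementary facts about the infima defining the optimal recovery errors. First I would record the key structural observation: since $R_{N,\delta}(A,W)_H$ is defined as an infimum over all continuous information maps $G$ and arbitrary recovery algorithms $\Psi$, any \emph{specific} admissible pair yields an upper bound. The truncation pair $(\tilde G_N,\tilde\Psi_N)$ from \eqref{tm1} is admissible, since $\tilde G_N$ is a bounded (hence continuous) linear map into $\mathbb{R}^N$ and $\tilde\Psi_N$ is an allowed arbitrary map, so
$$
R_{N,\delta}(A,W)_H \le \varepsilon_\delta(A,W,\tilde G_N,\tilde\Psi_N)_H .
$$
Combined with the exact formula \eqref{ep_eq} of Theorem~\ref{th2}, this immediately gives the right-hand bound in \eqref{dob}. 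For the left-hand bound in \eqref{dob} I would simply invoke \eqref{est_bel} of Theorem~\ref{th1}, noting that the range $\min_{0\le k\le N}$ there is exactly the $\min_{k\le N}$ written here, as the index always starts at $k=0$.

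For the first display \eqref{dobd} I would treat the three inequalities in turn. The leftmost, $\mu_{N_\delta}/\xi_{N_\delta}\le R_\delta(A,W)_H$, is precisely \eqref{R_delta}. The middle inequality $R_\delta(A,W)_H\le R_{N_\delta,\delta}(A,W)_H$ follows from the definition $R_\delta(A,W)_H=\inf_N R_{N,\delta}(A,W)_H$ by selecting the single value $N=N_\delta$. For the rightmost strict inequality I would apply the infimum property once more, now with $N=N_\delta$, to obtain $R_{N_\delta,\delta}(A,W)_H\le \varepsilon_\delta(A,W,\tilde G_{N_\delta},\tilde\Psi_{N_\delta})_H$, and then quote the strict estimate \eqref{tdo} of Theorem~\ref{th2}.

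Since each step is either a verbatim citation of an already-proven bound or a one-line consequence of the definition of the infimum, there is essentially no substantive difficulty to overcome here; the content of the Corollary lies entirely in Theorems~\ref{th1} and \ref{th2}. The only point requiring mild care is index bookkeeping: I would read the subscript appearing in \eqref{tdo} as $N_\delta-1$, consistent with the proof of Theorem~\ref{th2} (which invokes $\delta<\xi_{N_\delta-1}^{-1}$), so that it aligns with the $\mu_{N_\delta-1}/\xi_{N_\delta-1}$ term of \eqref{dobd}. Once the notation is matched, the Corollary follows simply by assembling the cited bounds.
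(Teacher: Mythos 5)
Your proposal is correct and matches the paper's reasoning exactly: the paper presents this Corollary as an immediate consequence of Theorems~\ref{th1} and \ref{th2}, with the upper bounds coming from the truncation method as an admissible pair in the infimum defining $R_{N,\delta}$, which is precisely what you spell out. Your remark on reading the subscript in \eqref{tdo} as $N_\delta-1$ (consistent with $\delta<\xi_{N_\delta-1}^{-1}$ used in the proof of Theorem~\ref{th2}) correctly resolves what is evidently a typo in the paper.
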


We now investigate under which conditions the lower and upper bounds in \eqref{dobd} and \eqref{dob}
only differ by a constant factor, so that the truncation method is optimal in order.

For the sake of simplicity we assume from now on that the sequence $\{\frac{\mu_N}{\xi_N}\}$ is monotone
decreasing, that is
\begin{equation}  \label{muxi}
\frac{\mu_{0}}{\xi_{0}} \ge\frac{\mu_{1}}{\xi_{1}} \ge  \cdots \ge \frac{\mu_{k}}{\xi_{k}}  \ge \cdots ,
\qquad \lim_{k\to \infty} \frac{\mu_{k}}{\xi_{k}} = 0 .
\end{equation}
Then the formulas \eqref{ep_eq}--\eqref{dob} simplify to
\begin{align}
\label{dobd1}
\frac{\mu_{N_\delta}}{\xi_{N_\delta}}
\le R_{\delta} (A, W)_H
&\le R_{\delta,N_\delta} (A, W)_H
\le \varepsilon_\delta (A, W, \tilde G_{N_\delta}, \tilde \Psi_{N_\delta})_H
<\Big(\frac{\mu_{N_{\delta}-1}^2}{\xi_{N_{\delta}-1}^2} + \frac{\mu_{N_{\delta}}^2}{\xi_{N_{\delta}}^2}\Big)^{1/2},
\\
\label{dob1}
\max \Big\{ \frac{\mu_{N_\delta}}{\xi_{N_\delta}},\;\frac{\mu_N}{\xi_N}\Big\}
&\le R_{N,\delta} (A, W)_H \le \varepsilon_\delta (A, W, \tilde G_N, \tilde \Psi_N)_H =
\Big( \delta^2 \mu_{N-1}^2 +\frac{\mu_{N}^2}{\xi_{N}^2}\Big)^{1/2}.
\end{align}

In what follows, the same symbol $c$ will denote various positive constants that do not depend on $\delta$ and $N$.

We want to establish inequalities of the type
\begin{equation} \label{est_up}
\varepsilon_\delta (A, W, \tilde G_N, \tilde \Psi_N)_H \le
c R_{N,\delta} (A, W)_H
\end{equation}
for some constants $c\ge 1$.

Let us first consider the case $N\le N_\delta$.
Assume $\{\frac{\mu_N}{\xi_N}\}$ is monotone decreasing.
 By (\ref{est_bel}) we get
$R_{N,\delta} (A, W)_H \ge \frac{\mu_N}{\xi_N}$. Then (\ref{est_up}) is satisfied in view of (\ref{ep_eq}) if
$$
\frac{\mu_N^2}{\xi_N^2} + \delta^2 \mu_{N-1}^2 \le c^2 \frac{\mu_N^2}{\xi_N^2} ,
$$
that is
$$
\delta \mu_{N-1} \le (c^2-1)^{1/2} \frac{\mu_N}{\xi_N} .
$$
If $N<N_\delta$, then $\delta < \xi_N^{-1}$ by (\ref{N_delta}), and hence the condition is satisfied
as soon as $(c^2-1)^{1/2} \ge 1$, that is $c \ge \sqrt{2}$.
Hence we obtained the following statement.

\begin{theorem}  \label{th3}
If  $\{\frac{\mu_N}{\xi_N}\}$ is monotone decreasing,
then for any $N < N_\delta$,
\begin{equation} \label{ep_R}
\varepsilon_\delta (A, W, \tilde G_N, \tilde \Psi_N)_H \le
\sqrt{2} \frac{\mu_N}{\xi_N} \le \sqrt{2}
R_{N,\delta} (A, W)_H .
\end{equation}
\end{theorem}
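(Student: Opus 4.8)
The plan is to prove the two inequalities in the chain separately: the first, $\varepsilon_\delta(A,W,\tilde G_N,\tilde\Psi_N)_H \le \sqrt2\,\mu_N/\xi_N$, is an upper bound on the truncation error, while the second, $\sqrt2\,\mu_N/\xi_N \le \sqrt2\,R_{N,\delta}(A,W)_H$, is a lower bound on the optimal recovery error. Both reduce to Theorems~\ref{th1} and \ref{th2} once the monotonicity hypothesis is used to evaluate the extrema over $k$ appearing there, so the only real content is a single elementary estimate on the noise term.

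For the first inequality I would start from the exact formula \eqref{ep_eq}. Since $\{\mu_k/\xi_k\}$ is monotone decreasing, $\max_{k\ge N}\mu_k^2/\xi_k^2 = \mu_N^2/\xi_N^2$, so that
$$
\varepsilon_\delta(A,W,\tilde G_N,\tilde\Psi_N)_H = \Big(\frac{\mu_N^2}{\xi_N^2} + \delta^2\mu_{N-1}^2\Big)^{1/2}.
$$
It then suffices to show $\delta^2\mu_{N-1}^2 \le \mu_N^2/\xi_N^2$, for substituting this bound collapses the right-hand side to $(2\mu_N^2/\xi_N^2)^{1/2} = \sqrt2\,\mu_N/\xi_N$.

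The step that needs care, and where the hypothesis $N<N_\delta$ is used, is precisely this noise estimate $\delta\mu_{N-1}\le\mu_N/\xi_N$. By the definition \eqref{N_delta} of $N_\delta$, the condition $N<N_\delta$ forces $\xi_N<1/\delta$, hence $\delta<\xi_N^{-1}$; combining this with the monotonicity $\mu_{N-1}\le\mu_N$ of the eigenvalues from \eqref{mu} yields
$$
\delta\mu_{N-1} < \frac{\mu_{N-1}}{\xi_N} \le \frac{\mu_N}{\xi_N},
$$
which is exactly what is required. This is the main (if mild) obstacle; the rest is bookkeeping with the monotone sequence.

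Finally, for the second inequality I would invoke the lower bound \eqref{est_bel} from Theorem~\ref{th1}: under monotonicity $\min_{0\le k\le N}\mu_k/\xi_k = \mu_N/\xi_N$, so $R_{N,\delta}(A,W)_H \ge \mu_N/\xi_N$, and multiplying through by $\sqrt2$ closes the chain.
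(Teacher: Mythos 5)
Your proposal is correct and follows essentially the same route as the paper: the exact formula \eqref{ep_eq} with the monotonicity hypothesis evaluating $\max_{k\ge N}\mu_k^2/\xi_k^2 = \mu_N^2/\xi_N^2$, the noise bound $\delta\mu_{N-1} < \mu_{N-1}/\xi_N \le \mu_N/\xi_N$ from $N<N_\delta$ together with \eqref{N_delta} and \eqref{mu}, and the lower bound \eqref{est_bel} giving $R_{N,\delta}(A,W)_H \ge \mu_N/\xi_N$. The paper merely packages the same computation as a search for the admissible constant $c$ in \eqref{est_up}, concluding $c=\sqrt2$ suffices.
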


For $N=N_\delta$ we have $\delta < \xi_{N_\delta-1}^{-1}$ by (\ref{N_delta}), hence
a sufficient condition for $c$ in (\ref{est_up}) becomes
$$
\frac{\mu_{N_\delta}^2}{\xi_{N_\delta}^2} + \frac{\mu_{N_\delta-1}^2}{\xi_{N_\delta-1}^2} \le
c^2 \frac{\mu_{N_\delta}^2}{\xi_{N_\delta}^2} ,
$$
that is
$$
c \ge \Big(1+ \Big(\frac{\xi_{N_\delta}}{\mu_{N_\delta}}
\frac{\mu_{N_\delta-1}}{\xi_{N_\delta-1}}\Big)^2\Big)^{1/2} ,
$$
and we get

\begin{theorem}  \label{th4}
If $\{\frac{\mu_N}{\xi_N}\}$ is monotone decreasing, then
\begin{equation} \label{ep_c}
\varepsilon_\delta (A, W, \tilde G_{N_\delta}, \tilde \Psi_{N_\delta})_H \le
\Big(\frac{\mu_{N_\delta}^2}{\xi_{N_\delta}^2} +
\frac{\mu_{N_\delta-1}^2}{\xi_{N_\delta-1}^2}\Big)^{1/2} \le
K_\delta \frac{\mu_{N_\delta}}{\xi_{N_\delta}} ,
\end{equation}
where
\begin{equation} \label{c_delta}
K_\delta := \Big(1+ \Big(\frac{\xi_{N_\delta}}{\mu_{N_\delta}}
\frac{\mu_{N_\delta-1}}{\xi_{N_\delta-1}}\Big)^2\Big)^{1/2}.
\end{equation}
\end{theorem}

\begin{remark} {\rm In Example \ref{ex1} we have for any $n=1,2,\ldots$:
\begin{flushleft}
$\frac{\mu_{2n}}{\xi_{2n}} = \frac{(n+1)^2}{(2n)^{\gamma}} = \frac{(1+1/n)^2}{4(2n)^{\gamma-2}} , \quad
\frac{\mu_{2n+1}}{\xi_{2n+1}} = \frac{(n+1)^2}{(2n+1)^{\gamma}} = \frac{\left(1+\frac{1}{2n+1}\right)^2}{4(2n+1)^{\gamma-2}}$ ,
\end{flushleft}
\begin{flushleft}
$$
\frac{\xi_{N_\delta}}{\mu_{N_\delta}} \frac{\mu_{N_\delta-1}}{\xi_{N_\delta-1}} =
\left\{ \begin{array}{cl}
\frac{(2n)^{\gamma-2}(1+\frac{1}{2n-1})^2}{(1+1/n)^2(2n-1)^{\gamma-2}}, \ & {\rm if}\ N_\delta=2n {\rm(even)},\\\\
\frac{(2n+1)^{\gamma-2}(1+1/n)^2}{(1+\frac{1}{2n+1})^2 (2n)^{\gamma-2}}, \ & {\rm if} N_\delta=2n+1 {\rm(odd)}
\end{array} \right. \le
\Big(\frac{N_\delta}{N_\delta-1}\Big)^\gamma \ {\rm is\ bounded, and}\ K_\delta \ {\rm bounded}.
$$
\end{flushleft}
\begin{flushleft}
For Example \ref{ex2}:
$\frac{\mu_N}{\xi_N}= N^{-\mu} e^{-\lambda_N(T-t)}$, hence
$$
\frac{\xi_{N_\delta}}{\mu_{N_\delta}} \frac{\mu_{N_\delta-1}}{\xi_{N_\delta-1}} =
\Big(\frac{N_\delta}{N_\delta-1}\Big)^\mu e^{(\lambda_{N_\delta}-\lambda_{N_{\delta}-1})(T-t)} ,
$$
\end{flushleft}
which is bounded if $t=T$.
}
\end{remark}

Now let us return to the discussion of optimality and consider the case $N> N_\delta$.\\
By (\ref{ep_eq}) we have
$$
\varepsilon_\delta (A, W, \tilde G_N, \tilde \Psi_N)_H \ge \delta \mu_{N-1} .
$$
If $\{\frac{\mu_N}{\xi_N}\}$ is monotone decreasing, then by using (\ref{R_delta})
we get for all $N>N_\delta$
$$
\varepsilon_\delta (A, W, \tilde G_N, \tilde \Psi_N)_H \ge \frac{1}{\sqrt{2}}
\Big(\frac{\mu_{N_\delta}^2}{\xi_{N_\delta}^2} + \delta^2 \mu_{N_\delta-1}^2\Big)^{1/2} ,
$$
which implies the following statement.

\begin{theorem}  \label{th6}
If $\{\frac{\mu_N}{\xi_N}\}$ is monotone decreasing, then for any $N>N_\delta$,
\begin{equation} \label{ep_R1}
\varepsilon_\delta (A, W, \tilde G_N, \tilde \Psi_N)_H \ge
\frac{1}{\sqrt{2}}
\varepsilon_\delta (A, W, \tilde G_{N_\delta}, \tilde \Psi_{N_\delta})_H
\ge \frac{1}{\sqrt{2}} R_{N_\delta,\delta} (A, W)_H .
\end{equation}
\end{theorem}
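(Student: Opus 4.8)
The plan is to compare the two truncation errors directly through the exact formula of Theorem~\ref{th2}, which under the monotonicity hypothesis reduces to \eqref{dob1}. Since $\{\mu_k/\xi_k\}$ is monotone decreasing, $\max_{k\ge N}\mu_k^2/\xi_k^2 = \mu_N^2/\xi_N^2$, so for every index
$$
\varepsilon_\delta (A, W, \tilde G_N, \tilde \Psi_N)_H = \Big(\frac{\mu_N^2}{\xi_N^2} + \delta^2 \mu_{N-1}^2\Big)^{1/2} .
$$
The first inequality in \eqref{ep_R1} then becomes a purely numerical estimate relating the right-hand side evaluated at $N$ to the one evaluated at $N_\delta$, so no new variational or topological input (such as Borsuk's theorem from Theorem~\ref{th1}) is needed here.

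First I would discard the nonnegative leading term in the formula for $\tilde G_N$, giving the crude but decisive lower bound $\varepsilon_\delta (A, W, \tilde G_N, \tilde \Psi_N)_H \ge \delta\mu_{N-1}$. The whole argument then rests on showing that $\delta\mu_{N-1}$ already dominates, up to the factor $\sqrt 2$, the \emph{full} error at $N_\delta$. The intuition is that enlarging $N$ beyond $N_\delta$ only inflates the noise-amplification term $\delta\mu_{N-1}$, which is nondecreasing in $N$, so $N_\delta$ is essentially the best truncation level and any larger $N$ cannot do much better.

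The main step is to bound each of the two terms of $\varepsilon_\delta (A, W, \tilde G_{N_\delta}, \tilde \Psi_{N_\delta})_H^2 = \mu_{N_\delta}^2/\xi_{N_\delta}^2 + \delta^2\mu_{N_\delta-1}^2$ by $(\delta\mu_{N-1})^2$. For the second term this is immediate: since $N>N_\delta$ forces $N_\delta-1\le N-1$ and $\{\mu_k\}$ is nondecreasing, $\delta^2\mu_{N_\delta-1}^2\le\delta^2\mu_{N-1}^2$. For the first term I would invoke the defining property $\xi_{N_\delta}\ge 1/\delta$ from \eqref{N_delta}, which gives $\mu_{N_\delta}/\xi_{N_\delta}\le\delta\mu_{N_\delta}$, and then $\mu_{N_\delta}\le\mu_{N-1}$ because $N>N_\delta$ yields $N_\delta\le N-1$. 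Adding the two bounds gives
$$
\varepsilon_\delta (A, W, \tilde G_{N_\delta}, \tilde \Psi_{N_\delta})_H^2 \le 2(\delta\mu_{N-1})^2 \le 2\,\varepsilon_\delta (A, W, \tilde G_N, \tilde \Psi_N)_H^2 ,
$$
which, after taking square roots, is exactly the first inequality of \eqref{ep_R1}.

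The second inequality is not really an obstacle: $R_{N_\delta,\delta}(A,W)_H$ is by definition the infimum of $\varepsilon_\delta(A,W,G,\Psi)_H$ over all continuous $G:H\to\mathbb{R}^{N_\delta}$ and arbitrary $\Psi:\mathbb{R}^{N_\delta}\to H$, and the pair $(\tilde G_{N_\delta},\tilde\Psi_{N_\delta})$ is admissible since $\tilde G_{N_\delta}$ takes values in $\mathbb{R}^{N_\delta}$; hence $\varepsilon_\delta (A, W, \tilde G_{N_\delta}, \tilde \Psi_{N_\delta})_H\ge R_{N_\delta,\delta}(A,W)_H$. The only point that needs genuine care throughout is the bookkeeping of the index shift $N_\delta\le N-1$: this is precisely where the hypothesis $N>N_\delta$ enters, and it is what makes both term-by-term comparisons above valid simultaneously.
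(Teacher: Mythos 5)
Your proposal is correct and follows essentially the same route as the paper: both discard the bias term to get $\varepsilon_\delta(A,W,\tilde G_N,\tilde\Psi_N)_H\ge\delta\mu_{N-1}$, then use $\xi_{N_\delta}\ge 1/\delta$ and the monotonicity of $\{\mu_k\}$ (via $N_\delta\le N-1$) to bound both terms of $\varepsilon_\delta(A,W,\tilde G_{N_\delta},\tilde\Psi_{N_\delta})_H^2$ by $\delta^2\mu_{N-1}^2$, and conclude with the definition of $R_{N_\delta,\delta}(A,W)_H$ as an infimum. Your write-up is in fact slightly more explicit than the paper's, which compresses the two term-by-term comparisons into a single cited step.
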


This shows that we cannot get much better accuracy by using the truncation method with $N>N_\delta$.

To summarize, we have established the optimality of the choice $N=N_\delta$ in the sense of the following theorem obtained
 from (\ref{ep_c}) and (\ref{R_delta}).

\begin{theorem}  \label{th7}
If $\{\frac{\mu_N}{\xi_N}\}$ is monotone decreasing, then
\begin{equation} \label{ep_ineq}
\frac{\mu_{N_\delta}}{\xi_{N_\delta}}
\le R_\delta(A, W)_H \le R_{N_\delta,\delta}(A, W)_H \le
\varepsilon_\delta (A, W, \tilde G_{N_\delta}, \tilde \Psi_{N_\delta})_H \le
\Big(\frac{\mu_{N_\delta}^2}{\xi_{N_\delta}^2} + \frac{\mu_{N_\delta-1}^2}{\xi_{N_\delta-1}^2}\Big)^{1/2}  .
\end{equation}

\end{theorem}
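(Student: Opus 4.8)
The plan is to read off the four-term chain in \eqref{ep_ineq} as a concatenation of inequalities that have already been prepared, so that no genuinely new estimate is needed; the work is purely in identifying the right source for each link and in checking that the truncation method qualifies as an admissible competitor in the two infima defining $R_\delta$ and $R_{N_\delta,\delta}$.

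For the leftmost inequality $\frac{\mu_{N_\delta}}{\xi_{N_\delta}}\le R_\delta(A,W)_H$ I would simply invoke \eqref{R_delta} from Theorem~\ref{th1}, which gives exactly this bound for every $\delta$ and does not even require monotonicity of $\{\mu_N/\xi_N\}$. For the rightmost inequality I would invoke the first estimate in \eqref{ep_c} from Theorem~\ref{th4}: under the standing monotonicity assumption \eqref{muxi} the quantity $\max_{k\ge N_\delta}\frac{\mu_k^2}{\xi_k^2}$ appearing in the exact formula \eqref{ep_eq} collapses to the single term $\frac{\mu_{N_\delta}^2}{\xi_{N_\delta}^2}$, and the perturbation term $\delta^2\mu_{N_\delta-1}^2$ is controlled by $\frac{\mu_{N_\delta-1}^2}{\xi_{N_\delta-1}^2}$ because $\delta\xi_{N_\delta-1}<1$ by the definition \eqref{N_delta} of $N_\delta$. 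This yields the bound $\varepsilon_\delta(A,W,\tilde G_{N_\delta},\tilde\Psi_{N_\delta})_H\le\big(\frac{\mu_{N_\delta}^2}{\xi_{N_\delta}^2}+\frac{\mu_{N_\delta-1}^2}{\xi_{N_\delta-1}^2}\big)^{1/2}$, the last link of the chain.

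It remains to connect the two extremes through the two middle inequalities, both of which are immediate from the definitions. Since $R_\delta(A,W)_H=\inf_N R_{N,\delta}(A,W)_H$, taking $N=N_\delta$ gives $R_\delta(A,W)_H\le R_{N_\delta,\delta}(A,W)_H$. For the third link, $R_{N_\delta,\delta}(A,W)_H$ is itself an infimum of $\varepsilon_\delta(A,W,G,\Psi)_H$ over all continuous maps $G:H\to\mathbb{R}^{N_\delta}$ and arbitrary $\Psi:\mathbb{R}^{N_\delta}\to H$; the pair $(\tilde G_{N_\delta},\tilde\Psi_{N_\delta})$ from \eqref{tm1} is one such admissible competitor, the map $\tilde G_{N_\delta}$ being the continuous linear map returning the first $N_\delta$ coordinates $[\langle f,w_k\rangle]_{k=0}^{N_\delta-1}$, so the infimum is bounded above by the error of this particular method, giving $R_{N_\delta,\delta}(A,W)_H\le\varepsilon_\delta(A,W,\tilde G_{N_\delta},\tilde\Psi_{N_\delta})_H$. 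Chaining the four inequalities produces \eqref{ep_ineq}. I expect no genuine obstacle here: the only points requiring care are the bookkeeping that $\tilde G_{N_\delta}$ has exactly $N_\delta$ output components and hence is admissible for $R_{N_\delta,\delta}$, and that the monotonicity \eqref{muxi} is precisely what licenses the passage from the $\max$ in \eqref{ep_eq} to the closed form used in Theorem~\ref{th4}.
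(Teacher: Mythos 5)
Your proposal is correct and matches the paper's own proof, which derives Theorem~\ref{th7} precisely by chaining the lower bound \eqref{R_delta} from Theorem~\ref{th1}, the definitional inequalities $R_\delta \le R_{N_\delta,\delta} \le \varepsilon_\delta(A,W,\tilde G_{N_\delta},\tilde\Psi_{N_\delta})_H$, and the upper bound \eqref{ep_c} from Theorem~\ref{th4}. Your added bookkeeping (admissibility of $(\tilde G_{N_\delta},\tilde\Psi_{N_\delta})$ as a competitor, and the collapse of the max in \eqref{ep_eq} via \eqref{muxi} together with $\delta\xi_{N_\delta-1}<1$) is exactly the reasoning the paper leaves implicit.
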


\vskip 2mm

Since we know that in certain cases the lower and upper bounds in (\ref{ep_ineq}) my be of different order,
it may be useful to consider specific error levels $\delta$ such that in (\ref{N_delta})
$$
\delta = \xi_{N_\delta}^{-1} .
$$
In this case, Theorem \ref{th3} applies to $N=N_\delta$ as well, that is, we have

\begin{theorem}  \label{th8}
If $\delta = \xi_{N_\delta}^{-1}$, then
\begin{equation} \label{ep_mu}
\frac{\mu_{N_\delta}}{\xi_{N_\delta}} \le
R_\delta(A, W)_H \le  R_{{N_\delta},\delta}(A, W)_H \le
\varepsilon_\delta (A, W, \tilde G_{N_\delta}, \tilde \Psi_{N_\delta})_H \le
\sqrt{2} \frac{\mu_{N_\delta}}{\xi_{N_\delta}} .  \nonumber
\end{equation}
Moreover, if $\delta \le c \xi_{N_\delta}^{-1}$ for a constant $c\ge 1$,
then the same inequalities hold, with the right hand side replaced by
$(1+c^2)^{1/2} \frac{\mu_{N_\delta}}{\xi_{N_\delta}}$.
\end{theorem}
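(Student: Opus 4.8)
The plan is to read off the entire left portion of the chain from results already proved and to spend the real effort only on the sharper rightmost bound. Indeed, the inequalities
$\frac{\mu_{N_\delta}}{\xi_{N_\delta}}\le R_\delta(A,W)_H\le R_{N_\delta,\delta}(A,W)_H\le \varepsilon_\delta(A,W,\tilde G_{N_\delta},\tilde\Psi_{N_\delta})_H$
are exactly the first three inequalities of \eqref{ep_ineq} in Theorem~\ref{th7}: the leftmost is \eqref{R_delta}, the middle one is $R_\delta=\inf_N R_{N,\delta}\le R_{N_\delta,\delta}$, and the third holds because $R_{N,\delta}$ is an infimum over all continuous information maps and all recovery algorithms and is therefore dominated by the error of the particular truncation pair $(\tilde G_{N_\delta},\tilde\Psi_{N_\delta})$. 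None of this uses the special value of $\delta$, so I would simply invoke Theorem~\ref{th7} for it and reduce the whole theorem to improving the final term.

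Next I would evaluate the truncation error at $N=N_\delta$ using the exact formula \eqref{dob1}, which under the standing monotonicity assumption on $\{\mu_N/\xi_N\}$ reads
\[
\varepsilon_\delta(A,W,\tilde G_{N_\delta},\tilde\Psi_{N_\delta})_H^2
= \frac{\mu_{N_\delta}^2}{\xi_{N_\delta}^2} + \delta^2\mu_{N_\delta-1}^2 .
\]
Substituting the hypothesis $\delta=\xi_{N_\delta}^{-1}$ turns the noise term into $\mu_{N_\delta-1}^2/\xi_{N_\delta}^2$, and the single inequality that drives the result is $\mu_{N_\delta-1}\le\mu_{N_\delta}$, which holds because $\{\mu_k\}$ is nondecreasing by \eqref{mu}. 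This bounds the noise term by the approximation term $\mu_{N_\delta}^2/\xi_{N_\delta}^2$, so the right-hand side is at most $2\mu_{N_\delta}^2/\xi_{N_\delta}^2$, and taking square roots gives $\varepsilon_\delta(A,W,\tilde G_{N_\delta},\tilde\Psi_{N_\delta})_H\le\sqrt2\,\mu_{N_\delta}/\xi_{N_\delta}$, completing the main chain.

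For the final assertion I would repeat the same computation under the weaker hypothesis $\delta\le c\xi_{N_\delta}^{-1}$ with $c\ge 1$: the noise term is then bounded by $c^2\mu_{N_\delta-1}^2/\xi_{N_\delta}^2\le c^2\mu_{N_\delta}^2/\xi_{N_\delta}^2$, again by $\mu_{N_\delta-1}\le\mu_{N_\delta}$, so the squared error is at most $(1+c^2)\mu_{N_\delta}^2/\xi_{N_\delta}^2$ and the constant $\sqrt2$ is replaced by $(1+c^2)^{1/2}$, exactly as claimed.

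There is no genuine obstacle here; the point to get right is conceptual rather than computational. For a general admissible $\delta$ one only knows $\xi_{N_\delta}^{-1}\le\delta<\xi_{N_\delta-1}^{-1}$ from \eqref{N_delta}, so the noise contribution $\delta\mu_{N_\delta-1}$ can be nearly as large as $\mu_{N_\delta-1}/\xi_{N_\delta-1}$, which need not be comparable to $\mu_{N_\delta}/\xi_{N_\delta}$; this is precisely why the two ends of \eqref{ep_ineq} may be of different order. The role of the hypothesis $\delta=\xi_{N_\delta}^{-1}$ (or $\delta\le c\xi_{N_\delta}^{-1}$) is to force $\delta$ to the smallest value compatible with \eqref{N_delta}, where it couples to $\xi_{N_\delta}$ rather than to $\xi_{N_\delta-1}$, and the monotonicity of $\{\mu_k\}$ then closes the gap up to the stated constant.
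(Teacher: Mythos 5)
Your proposal is correct and takes essentially the same route as the paper: the paper obtains Theorem~\ref{th8} by observing that the argument of Theorem~\ref{th3} extends to $N=N_\delta$ once $\delta\le c\,\xi_{N_\delta}^{-1}$, which is precisely your computation of bounding the noise term $\delta\mu_{N_\delta-1}$ by $c\,\mu_{N_\delta}/\xi_{N_\delta}$ (using $\mu_{N_\delta-1}\le\mu_{N_\delta}$) in the exact error formula \eqref{ep_eq}, with the left part of the chain read off from \eqref{R_delta} and the definitions of the infima.
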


\vskip 2mm

As usual, in order to express the equivalence of a pair of expressions $E_1,E_2$ in order, we write
$$
E_1\asymp E_2$$
as long as  there are constants $c',c''>0$ independent of certain parameters,  such that
$$
c' E_2\le E_1\le c'' E_2.$$

We now explore the behaviour of $R_{N,\delta} (A, W)_H$ for $N$ chosen depending on $\delta$ such that
$$
\frac{1}{\xi_N}\asymp \delta.$$

\begin{remark} {\rm For a rapidly growing sequence $\xi_N$, the indices $N=N(\delta)$ satisfying the required double inequality
$$
\frac{c'}{\delta} \le \xi_{N} \le \frac{c''}{\delta}$$
may not exist.
For example, let $\xi_N=e^{\alpha N^\beta}$. Then we get inequalities
$$
\ln c' + \log \frac{1}{\delta} \le \alpha N^\beta \le \ln c'' + \log \frac{1}{\delta} ,
$$
that is
$$
\alpha N^\beta \in \log \frac{1}{\delta} + [\ln c', \ln c''] ,
$$
interval of a fixed length $\ln c'' - \ln c'$.
However, for $\beta>1$ the distances between the elements of the sequence $\alpha N^\beta$ grow
and they will become greater than $\ln c'' - \ln c'$ for sufficiently big $N$ needed to get
to the sizes of $\log 1/\delta$ for a small $\delta$.
In such cases assumption $\delta \asymp \xi_{N}^{-1}$ is invalid for sufficiently small $\delta>0$.
}
\end{remark}

Let
\begin{equation} \label{slowly}
\xi_k = c k^\eta e^{\alpha k^\beta},\qquad\text{with}
\quad c>0,\quad 0\le \beta\le 1,\quad \eta, \alpha\ge 0,\quad \alpha+\eta>0 ,
\end{equation}
where the requirement $\beta\le 1$ ensures that the assumption $\delta \asymp \xi_{N}^{-1}$ makes sense.

From the results established above we can obtain, in particular, the following statement.
\vskip 2mm

\begin{theorem}  \label{th9}
Let $\xi_k$ be given by (\ref{slowly}). For any $\delta>0$ and $N=N(\delta)$ chosen such that $\frac{1}{\xi_N}\asymp \delta$,
we have
\begin{equation} \label{optC}
R_{\delta} (A, W)_H \asymp
R_{N,\delta} (A, W)_H \asymp \frac{\mu_N}{\xi_N} ,
\end{equation}
which indicates that order-optimal estimates are achieved by the truncation method (\ref{tm1}), (\ref{tm2}).
\end{theorem}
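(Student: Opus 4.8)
The plan is to reduce the whole statement to the single order relation $\frac{\mu_N}{\xi_N}\asymp\frac{\mu_{N_\delta}}{\xi_{N_\delta}}$, after which the sharp bounds already available at the index $N_\delta$ (Theorem \ref{th8}) do the rest.

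First I would dispose of the claim $R_{N,\delta}(A,W)_H\asymp\frac{\mu_N}{\xi_N}$, which needs neither $N_\delta$ nor the special form \eqref{slowly}. The lower bound is immediate from \eqref{dob1}, since that inequality already gives $R_{N,\delta}(A,W)_H\ge\frac{\mu_N}{\xi_N}$. For the upper bound I would use $R_{N,\delta}(A,W)_H\le\varepsilon_\delta(A,W,\tilde G_N,\tilde\Psi_N)_H=(\delta^2\mu_{N-1}^2+\frac{\mu_N^2}{\xi_N^2})^{1/2}$ from \eqref{dob1}, together with $\mu_{N-1}\le\mu_N$ (from \eqref{mu}) and $\delta\le C/\xi_N$ (one half of the hypothesis $\delta\asymp1/\xi_N$); then $\delta\mu_{N-1}\le C\,\mu_N/\xi_N$ and hence $R_{N,\delta}(A,W)_H\le(1+C^2)^{1/2}\frac{\mu_N}{\xi_N}$.

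The heart of the argument is the comparison $\frac{\mu_N}{\xi_N}\asymp\frac{\mu_{N_\delta}}{\xi_{N_\delta}}$. The first step is to show $\xi_{N_\delta}\asymp\delta^{-1}$. By \eqref{N_delta} one has $\xi_{N_\delta-1}<\delta^{-1}\le\xi_{N_\delta}$, so only the upper bound $\xi_{N_\delta}\le C\delta^{-1}$ requires work, and for it it suffices that the consecutive ratios $\xi_k/\xi_{k-1}$ be uniformly bounded. For $\xi_k=ck^\eta e^{\alpha k^\beta}$ this ratio equals $(k/(k-1))^\eta e^{\alpha(k^\beta-(k-1)^\beta)}$; the polynomial factor tends to $1$, and $k^\beta-(k-1)^\beta$ stays bounded precisely because $\beta\le1$ (by the mean value theorem it is at most $\beta(k-1)^{\beta-1}\le\beta$ for $\beta<1$, and equals $1$ for $\beta=1$). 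This is the only place the hypothesis $\beta\le1$ in \eqref{slowly} is used, and I expect it to be the main technical point. Combining $\xi_{N_\delta}\asymp\delta^{-1}$ with the assumption $\xi_N\asymp\delta^{-1}$ yields $\xi_N\asymp\xi_{N_\delta}$.

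Next I would transfer this to the ratios $\mu_k/\xi_k$ by a short monotonicity argument. Relabelling so that $M\le M'$ denote the two indices $N,N_\delta$, and using that $\{\xi_k\}$ is nondecreasing with $\xi_{M'}\le C\xi_M$, that $\{\mu_k/\xi_k\}$ is nonincreasing, and that $\{\mu_k\}$ is nondecreasing, I obtain
$$
\frac{\mu_{M'}}{\xi_{M'}}\le\frac{\mu_M}{\xi_M}=\frac{\mu_M}{\mu_{M'}}\,\frac{\xi_{M'}}{\xi_M}\,\frac{\mu_{M'}}{\xi_{M'}}\le C\,\frac{\mu_{M'}}{\xi_{M'}},
$$
so that $\frac{\mu_N}{\xi_N}\asymp\frac{\mu_{N_\delta}}{\xi_{N_\delta}}$. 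Finally, since $\delta\xi_{N_\delta}$ is bounded (by the previous step), the condition $\delta\le c\,\xi_{N_\delta}^{-1}$ of Theorem \ref{th8} is satisfied, which gives $R_\delta(A,W)_H\asymp\frac{\mu_{N_\delta}}{\xi_{N_\delta}}$. Chaining this with $\frac{\mu_{N_\delta}}{\xi_{N_\delta}}\asymp\frac{\mu_N}{\xi_N}$ and with the estimate $R_{N,\delta}(A,W)_H\asymp\frac{\mu_N}{\xi_N}$ from the first step establishes \eqref{optC} and in particular $R_\delta\asymp R_{N,\delta}\asymp\frac{\mu_N}{\xi_N}$.
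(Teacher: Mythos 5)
Your proof is correct and takes essentially the route the paper intends: Theorem \ref{th9} is stated without an explicit proof (``from the results established above''), and your argument is exactly the required combination of \eqref{dob1}, the definition \eqref{N_delta}, and Theorem \ref{th8}, with the hypothesis $\beta\le 1$ in \eqref{slowly} entering only through the bounded consecutive ratios $\xi_k/\xi_{k-1}$ (mean value theorem), which yields $\delta\asymp\xi_{N_\delta}^{-1}$. The two details the paper leaves implicit --- this bounded-ratio property and the transfer $\xi_N\asymp\xi_{N_\delta}\Rightarrow \frac{\mu_N}{\xi_N}\asymp\frac{\mu_{N_\delta}}{\xi_{N_\delta}}$ via the monotonicity of $\{\mu_k\}$, $\{\xi_k\}$ and $\{\mu_k/\xi_k\}$ --- are supplied correctly in your write-up.
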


\vskip 2mm

\begin{remark}  {\rm
Let's compare the results of this section with the results from \cite{MP}.
It is easy to show that the estimates for the optimal recovery of ill-posed problems in
\cite{MP} and Theorem \ref{th9} of this study coincide in order.
This is a common point in the results discussed. But there is an important difference between them.
It lies in the various types of functionals used by the considered algorithms.
More precisely, linear functionals were used in \cite{MP}, while we used continuous functionals.
This difference is explained by different approaches to estimating from below the value of the best approximation.
Namely, in \cite{MP} the corresponding estimate was obtained using the Gelfand width,
which requires the use of linear functionals,
and in our study the lower estimate follows from Borsuk's theorem,
which deals with continuous functionals.
Since the estimates of optimal recovery are the same in both compared works,
Therefore, the results of this Section can be considered as a natural complement to the studies of \cite{MP}.
}
\end{remark}

\section{Optimal recovery in metrics of Banach spaces} \label{banach}
It should be noted that, in contrast to Hilbert spaces, the question of optimal recovery
of solutions to ill-posed problems in the metrics of other spaces has been studied much less.
Therefore it makes sense to study the quantity $R_{N,\delta}$
in the metrics of Banach spaces.

To this end, we present the following concepts and definitions.
Let $Q$ be a compact subset of the Euclidean space ${\mathbb R}^d$ with finite measure $|Q|$.
By $L_q=L_q(Q)$, $2\le q<\infty$, we mean the space of integrable on $Q$ $d$-variable  functions $f(t)$
equipped with the norm
$$
\|f\|_{q}:=
\bigg(\int_{Q} |f(t)|^q \, dt \bigg)^{\frac{1}{q}} .
$$
In $L_2$ the inner product is introduced in the standard way
$$
\langle f, g\rangle=\int_{Q} f(t) g(t) \ d t .
$$
By $C=C(Q)$ we mean the space of continuous on $Q$ functions.

In addition, we will make the following changes to the formulation of the optimal recovery problem (see Section 1).
Everywhere below, by $H$ we will mean $L_2(Q)$.
Further, let the sequences $\{\mu_k\}$ (\ref{mu}) and $\{\xi_k\}$ (\ref{xi}) are exponentially increasing,
the sequence $\{\frac{\mu_k}{\xi_k}\}$ is monotone decreasing and such that
for some constants $c_1, c_2>0$ and any $N$ it holds
$$
\Big(\sum_{k=0}^{N} \mu_k^2 \Big)^{1/2} \le c_1 \mu_N ,
$$
$$
\Big(\sum_{k=N}^{\infty} \frac{\mu_k^2}{\xi_k^2} \Big)^{1/2} \le c_2 \frac{\mu_N}{\xi_N} .
$$
Moreover, we suppose that there is a constant $\chi>0$ such that
for elements $w_k(t)$ of  orthonormal basis in $L_2$ it holds
$$
\|w_k\|_C \le \chi , \qquad k=0,1,2, \ldots .
$$
For any $2\le q \le \infty$ we will study the quantities
$$
\varepsilon_\delta (A, W, G, \Psi, L_q) :=
\sup_{\substack{f\in W, \\ \|f\|_{W}\leq 1}}\quad
\sup_{\substack{f^\delta\in L_2, \\ \|f-f^\delta\|_{2}\leq \delta}}\quad
\|Af - \Psi(G(f^\delta))\|_{q} ,
$$
$$
R_{N,\delta} (A, W, L_q) :=
\inf_{\substack{G: L_2\to {\mathbb R}^N, \\ \rm{(continuous\, map)}}}\quad
\inf_{\substack{\Psi: {\mathbb R}^N\to L_q \\ \rm{(arbitrary\, map)}}}\quad
\varepsilon_\delta (A, W, G, \Psi, L_q) ,
$$
where $L_\infty(Q)=C(Q)$.


\vskip 2mm

\begin{theorem}  \label{th10}
Let $2\le q\le \infty$. For any $\delta$ it holds
$$
R_\delta (A, W, L_q) \ge  |Q|^{1/q-1/2} \frac{\mu_{N_\delta}}{\xi_{N_\delta}}
$$
and for any $\delta, N$ we have
$$
R_{N,\delta} (A, W, L_q) \ge |Q|^{1/q-1/2} \max \Big\{\frac{\mu_{N_\delta}}{\xi_{N_\delta}}, \frac{\mu_{N}}{\xi_{N}}\Big\} ,
$$
where $N_\delta$ is defined according to (\ref{N_delta}).
\end{theorem}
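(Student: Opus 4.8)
The plan is to reduce Theorem~\ref{th10} to the Hilbert-space lower bounds of Theorem~\ref{th1} by means of a single norm-comparison inequality. Since $Q$ has finite measure and $q\ge 2$, Hölder's inequality (with conjugate exponents $q/2$ and $q/(q-2)$, the endpoint $q=\infty$ handled directly from $\|f\|_2\le|Q|^{1/2}\|f\|_\infty$) gives, for every $f\in L_q(Q)$,
\[
\|f\|_2 \le |Q|^{1/2-1/q}\,\|f\|_q, \qquad\text{equivalently}\qquad \|f\|_q \ge |Q|^{1/q-1/2}\,\|f\|_2 .
\]
This is the only new ingredient; the rest of the argument copies the two parts of the proof of Theorem~\ref{th1} with the $H$-norm replaced by the $L_q$-norm, each time passing back to the $L_2$-norm through this inequality. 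Note that all elements to which it is applied below are finite linear combinations of the $w_k$, hence continuous and in particular in $L_q(Q)$.

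First I would establish the bound on $R_\delta$. Taking $f_*=\xi_{N_\delta}^{-1}w_{N_\delta}$ exactly as in Theorem~\ref{th1}, one has $f_*\in W$, $\|f_*\|_W=1$ and $\|f_*\|_2=\xi_{N_\delta}^{-1}\le\delta$, so $0$ is a $\delta$-perturbation of both $f_*$ and $-f_*$. For any continuous $G:L_2\to\mathbb{R}^N$ and any $\Psi:\mathbb{R}^N\to L_q$, setting $g:=\Psi(G(0))$ and using the triangle inequality in $L_q$,
\[
2\|Af_*\|_q=\|A(-f_*)-Af_*\|_q\le \|A(-f_*)-g\|_q+\|Af_*-g\|_q ,
\]
whence $\varepsilon_\delta(A,W,G,\Psi,L_q)\ge\|Af_*\|_q$. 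Since $Af_*=\frac{\mu_{N_\delta}}{\xi_{N_\delta}}w_{N_\delta}$ and $\|w_{N_\delta}\|_2=1$, the norm-comparison inequality yields $\|Af_*\|_q\ge |Q|^{1/q-1/2}\frac{\mu_{N_\delta}}{\xi_{N_\delta}}$; taking the infimum over $N$, $G$ and $\Psi$ gives the claimed lower bound for $R_\delta(A,W,L_q)$, and the same inequality bounds every $R_{N,\delta}(A,W,L_q)$ from below.

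Next I would reproduce the Borsuk argument for the $N$-dependent term. For $x\in S^N$ put $f_x=\sum_{k=0}^N\frac{x_k}{\xi_k}w_k$, so that $\|f_x\|_W=1$, $f_{-x}=-f_x$, and $x\mapsto f_x$ is continuous. Given continuous $G:L_2\to\mathbb{R}^N$, the map $F(x):=G(f_x)$ is continuous from $S^N$ to $\mathbb{R}^N$, so by Borsuk's antipodal theorem there is $x$ with $G(f_x)=G(f_{-x})$. Taking $f_x^\delta=f_x$, $f_{-x}^\delta=f_{-x}$ (a trivial $\delta$-perturbation) and $g:=\Psi(G(f_x))$, the identity $Af_x=-Af_{-x}$ and the triangle inequality in $L_q$ again give $\varepsilon_\delta(A,W,G,\Psi,L_q)\ge\|Af_x\|_q$. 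Applying the norm-comparison inequality and computing the $L_2$-norm,
\[
\|Af_x\|_q\ge |Q|^{1/q-1/2}\Big(\sum_{k=0}^N\frac{\mu_k^2}{\xi_k^2}x_k^2\Big)^{1/2}\ge |Q|^{1/q-1/2}\min_{0\le k\le N}\frac{\mu_k}{\xi_k} ,
\]
and since $\{\mu_k/\xi_k\}$ is monotone decreasing throughout this section, the minimum equals $\mu_N/\xi_N$. Combining this with the bound from the previous paragraph produces the maximum in the statement.

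I do not expect a genuine obstacle: the topological content is already carried by Theorem~\ref{th1}, and the entire novelty is the constant $|Q|^{1/q-1/2}$. The only points requiring care are the direction of Hölder's inequality — on a finite-measure domain with $q\ge 2$ it is the $L_q$-norm that dominates the $L_2$-norm (up to the measure factor), which is exactly what is needed to lower-bound $\|\cdot\|_q$ — and the separate verification of the endpoint $q=\infty$, where the same constant $|Q|^{1/q-1/2}=|Q|^{-1/2}$ appears.
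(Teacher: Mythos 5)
Your proposal is correct and takes essentially the same approach as the paper: the paper's proof also reduces both lower bounds to the arguments of Theorem~\ref{th1} (the antipodal pair $\pm f_*$ with perturbation $0$, and the Borsuk argument for $f_x$), passing from the $L_q$-norm to the $L_2$-norm via $\|g\|_q \ge |Q|^{1/q-1/2}\|g\|_2$. Your write-up merely makes explicit what the paper leaves implicit (the H\"older step, the $q=\infty$ endpoint, and the use of the section's monotonicity assumption to identify $\min_{0\le k\le N}\mu_k/\xi_k$ with $\mu_N/\xi_N$).
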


\begin{proof}
As in Theorem \ref{th1} we choose
$$
f_* = \xi_{N_\delta}^{-1} w_{N_\delta} .
$$
Then $f_*\in W$ with
$$
\|f_*\|_W = 1,\quad \|f_*\|_2 = \xi_{N_\delta}^{-1},\quad \|Af_*\|_2 = \frac{\mu_{N_\delta}}{\xi_{N_\delta}} .
$$
Repeating the reasoning from the proof of Theorem \ref{th1}, we obtain
$$
{R}_\delta (A, W, L_q) \ge |Q|^{1/q-1/2} \|Af_*\|_2
= |Q|^{1/q-1/2} \frac{\mu_{N_\delta}}{\chi_{N_\delta}} .
$$

When proving the second part of the lower estimate, we also follow the reasoning from Theorem \ref{th1}.
As a result, we have
$$
\varepsilon_\delta (A, W, G, \Psi, L_q) \ge \|Af_x\|_{q}
\ge |Q|^{1/q-1/2} \|Af_x\|_{2} \ge |Q|^{1/q-1/2}
\frac{\mu_{N}}{\xi_{N}} .
$$
Thus, Theorem \ref{th10} is completely proven.
\end{proof}

\vskip 2mm

\begin{theorem}  \label{th11}
For any $2\le q\le \infty$, $\delta$ and $N$ the recovering error of the truncation method (\ref{tm1}), (\ref{tm2})
is estimated as
$$
\varepsilon_\delta (A, W, \tilde G_N, \tilde \Psi_N, L_q) \le
\max\{c_1, c_2\} |Q|^{1/q} \chi \Big(\frac{\mu_N}{\xi_N} + \delta \mu_{N-1}\Big) .
$$
\end{theorem}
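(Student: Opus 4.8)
The plan is to reuse the error decomposition from the proof of Theorem~\ref{th2} and then route the estimate through the uniform norm to capture the dependence on $|Q|^{1/q}$. Fix $f\in W$ with $\|f\|_W\le 1$ and $f^\delta\in L_2$ with $\|f-f^\delta\|_2\le\delta$. Since $\tilde\Psi_N(\tilde G_N(f^\delta))=S_N(Af^\delta)$, the recovery error splits exactly as in Theorem~\ref{th2}:
$$
Af-S_N(Af^\delta)=\sum_{k=N}^\infty \mu_k\langle f,w_k\rangle w_k+\sum_{k=0}^{N-1}\mu_k\langle f-f^\delta,w_k\rangle w_k=:g_1+g_2,
$$
with $g_1$ the truncation term and $g_2$ the data-error term.

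Because the eigenfunctions $w_k$ are orthogonal only in $L_2$, I cannot expand the $L_q$ norm coefficientwise for $q\ne 2$; instead I would first estimate both pieces in $C(Q)=L_\infty(Q)$. Using $\|w_k\|_C\le\chi$, the triangle inequality in $C$, and then Cauchy--Schwarz together with Bessel's inequality, I would bound the data-error term by
$$
\|g_2\|_C\le\chi\sum_{k=0}^{N-1}\mu_k|\langle f-f^\delta,w_k\rangle|\le\chi\Big(\sum_{k=0}^{N-1}\mu_k^2\Big)^{1/2}\|f-f^\delta\|_2\le\chi c_1\mu_{N-1}\delta,
$$
the last step invoking the assumption $\big(\sum_{k=0}^N\mu_k^2\big)^{1/2}\le c_1\mu_N$ at index $N-1$. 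For the truncation term I would insert the weights $\xi_k$ before applying Cauchy--Schwarz:
$$
\|g_1\|_C\le\chi\sum_{k=N}^\infty\frac{\mu_k}{\xi_k}\,\xi_k|\langle f,w_k\rangle|\le\chi\Big(\sum_{k=N}^\infty\frac{\mu_k^2}{\xi_k^2}\Big)^{1/2}\|f\|_W\le\chi c_2\frac{\mu_N}{\xi_N},
$$
using $\|f\|_W\le 1$ and the second assumption $\big(\sum_{k=N}^\infty\mu_k^2/\xi_k^2\big)^{1/2}\le c_2\mu_N/\xi_N$. The same bound shows the series defining $g_1$ converges absolutely and uniformly, so $g_1\in C(Q)$ and its uniform norm is legitimate.

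Finally I would convert the uniform estimate to the $L_q$ estimate through the elementary inequality $\|g\|_q\le|Q|^{1/q}\|g\|_C$, valid on the finite-measure set $Q$ (and trivially correct with exponent $0$ when $q=\infty$, where $L_\infty=C$). Combining,
$$
\|Af-S_N(Af^\delta)\|_q\le|Q|^{1/q}\big(\|g_1\|_C+\|g_2\|_C\big)\le|Q|^{1/q}\chi\max\{c_1,c_2\}\Big(\frac{\mu_N}{\xi_N}+\delta\mu_{N-1}\Big),
$$
and taking the supremum over admissible $f$ and $f^\delta$ yields the claim. The only genuinely new ingredient compared with Theorem~\ref{th2} is the passage $L_2\to C\to L_q$: the orthogonality exploited in the Hilbert-space estimate is replaced by the uniform bound $\|w_k\|_C\le\chi$ together with the two summation assumptions, which are tailored precisely so that the resulting $\ell^1$-type sums collapse into the clean constants $c_1,c_2$. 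I expect this step, rather than the routine decomposition, to be the crux.
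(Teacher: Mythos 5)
Your proof is correct and follows essentially the same route as the paper: the same decomposition into truncation and data-error terms, the same passage through the uniform norm via $\|w_k\|_C\le\chi$, the same weighted Cauchy--Schwarz applications invoking the assumptions on $c_1$ and $c_2$, and the same final conversion $\|g\|_q\le|Q|^{1/q}\|g\|_C$. The only (welcome) additions are minor rigor points the paper leaves implicit, such as the uniform convergence of the tail series and the explicit use of Bessel's inequality for the data-error term.
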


\begin{proof}
The proof of Theorem \ref{th11} repeats the line of reasoning from Theorem \ref{th2}.
It is only necessary to take into account that
$$
\|\sum_{k=N}^\infty \langle f , w_k\rangle \mu_k w_k\|_q
\le |Q|^{1/q} \|\sum_{k=N}^\infty \langle f , w_k\rangle \mu_k w_k\|_C
\le |Q|^{1/q} \chi \sum_{k=N}^\infty |\langle f , w_k\rangle| \mu_k
$$
$$
\le |Q|^{1/q} \chi \Big(\sum_{k=N}^\infty |\langle f , w_k\rangle|^2 \xi_k^2\Big)^{1/2}
\Big(\sum_{k=N}^\infty \frac{\mu_k^2}{\xi_k^2}\Big)^{1/2}
\le |Q|^{1/q} c_2 \chi \frac{\mu_N}{\xi_N} ,
$$
$$
\|\sum_{k=0}^{N-1} \langle f - f^\delta , w_k\rangle \mu_k w_k\|_q
\le |Q|^{1/q} \|\sum_{k=0}^{N-1} \langle f - f^\delta , w_k\rangle \mu_k w_k\|_C
\le |Q|^{1/q} \chi \sum_{k=0}^{N-1} |\langle f - f^\delta, w_k\rangle| \mu_k
$$
$$
\le |Q|^{1/q} \chi \Big(\sum_{k=0}^{N-1} |\langle f - f^\delta, w_k\rangle|^2\Big)^{1/2}
\Big(\sum_{k=0}^{N-1} \mu_k^2\Big)^{1/2}
\le |Q|^{1/q} c_1 \chi \delta \mu_{N-1} .
$$
This implies the statement of Theorem.
\end{proof}

The combination of Theorems \ref{th10}, \ref{th11} gives

\vskip 2mm

\begin{theorem}  \label{th12}
For $\xi_k$ (\ref{slowly}), any $2\le q\le \infty$, $\delta$ and $N$ such that $\frac{1}{\xi_N}\asymp \delta$
it holds
$$
{R}_\delta (A, W, L_q) \asymp
R_{N,\delta} (A, W, L_q) \asymp \frac{\mu_{N}}{\xi_{N}} .
$$
The indicated order-optimal estimates are achieved by the truncation method (\ref{tm1}), (\ref{tm2}).
\end{theorem}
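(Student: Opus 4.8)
The plan is to derive the stated order equivalences by combining the lower bounds of Theorem~\ref{th10} with the upper bound of Theorem~\ref{th11}, in complete analogy with the Hilbert-space Theorem~\ref{th9}. Throughout, the factors $|Q|^{1/q-1/2}$, $|Q|^{1/q}$, $\chi$, $c_1$ and $c_2$ depend only on $q$, $Q$ and the weights, not on $\delta$ or $N$, so they may be absorbed into the implied constants of $\asymp$.

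First I would treat the upper bounds. For the chosen index $N=N(\delta)$ with $\frac{1}{\xi_N}\asymp\delta$, Theorem~\ref{th11} gives
$$
\varepsilon_\delta(A,W,\tilde G_N,\tilde\Psi_N,L_q)\le\max\{c_1,c_2\}\,|Q|^{1/q}\chi\Big(\frac{\mu_N}{\xi_N}+\delta\mu_{N-1}\Big).
$$
Since $\delta\le c/\xi_N$ and $\{\mu_k\}$ is nondecreasing, the second term obeys $\delta\mu_{N-1}\le c\,\mu_{N-1}/\xi_N\le c\,\mu_N/\xi_N$, whence the whole right-hand side is bounded by $c\,\mu_N/\xi_N$. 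As the truncation method~(\ref{tm1}),~(\ref{tm2}) is one admissible pair $(\tilde G_N,\tilde\Psi_N)$ for the $L_q$ problem, and $R_\delta(A,W,L_q)\le R_{N,\delta}(A,W,L_q)\le\varepsilon_\delta(A,W,\tilde G_N,\tilde\Psi_N,L_q)$, both recovery errors are $\le c\,\mu_N/\xi_N$, and this bound is realised by truncation.

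For the lower bounds I would invoke Theorem~\ref{th10}. The estimate for $R_{N,\delta}$ is immediate, since
$$
R_{N,\delta}(A,W,L_q)\ge|Q|^{1/q-1/2}\max\Big\{\tfrac{\mu_{N_\delta}}{\xi_{N_\delta}},\tfrac{\mu_N}{\xi_N}\Big\}\ge|Q|^{1/q-1/2}\,\frac{\mu_N}{\xi_N},
$$
so $R_{N,\delta}(A,W,L_q)\asymp\mu_N/\xi_N$. For $R_\delta$, however, Theorem~\ref{th10} only yields $R_\delta(A,W,L_q)\ge|Q|^{1/q-1/2}\,\mu_{N_\delta}/\xi_{N_\delta}$, expressed at the index $N_\delta$ rather than at $N$; the remaining task is to pass from $N_\delta$ to $N$.

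The key step, and the main obstacle, is therefore the comparison $\frac{\mu_{N_\delta}}{\xi_{N_\delta}}\asymp\frac{\mu_N}{\xi_N}$ for the two indices that both satisfy $\xi\asymp1/\delta$. This is a statement about the decreasing sequence $\{\mu_k/\xi_k\}$ alone, independent of the ambient norm, and is precisely what the slow-growth form~(\ref{slowly}) secures: the restriction $\beta\le1$ keeps the consecutive ratios $\xi_{k+1}/\xi_k$ bounded, so that $\xi_{N_\delta}\asymp\xi_{N_\delta-1}\asymp1/\delta\asymp\xi_N$, and together with the boundedness of the factor $K_\delta$ from~(\ref{c_delta}) (Theorem~\ref{th4}) this forces $\mu_{N_\delta}/\xi_{N_\delta}$ and $\mu_N/\xi_N$ to be of the same order. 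This is the same comparison already exploited in proving the Hilbert-space Theorem~\ref{th9}, and invoking it gives $R_\delta(A,W,L_q)\ge c\,\mu_N/\xi_N$. Combined with the upper bound, this yields $R_\delta(A,W,L_q)\asymp R_{N,\delta}(A,W,L_q)\asymp\mu_N/\xi_N$, which is the assertion.
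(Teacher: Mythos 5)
Your proposal is correct and takes essentially the same route as the paper: the paper offers no written proof beyond the phrase ``the combination of Theorems \ref{th10}, \ref{th11} gives'', and your argument is exactly that combination, with the only nontrivial gap---passing from the lower bound at $N_\delta$ to the chosen $N$ with $\xi_N\asymp 1/\delta$---correctly identified and filled. One small quibble: the boundedness of $K_\delta$ from (\ref{c_delta}) is not really the operative tool for that comparison (it only relates consecutive indices $N_\delta-1,N_\delta$); what does the work is that (\ref{slowly}) with $\beta\le 1$ gives bounded ratios $\xi_{k+1}/\xi_k$, hence $\xi_{N_\delta}\asymp\xi_N$, which combined with the monotonicity of $\{\mu_k\}$ and of $\{\mu_k/\xi_k\}$ yields $\mu_{N_\delta}/\xi_{N_\delta}\asymp\mu_N/\xi_N$ in both cases $N\le N_\delta$ and $N>N_\delta$---but this is exactly the mechanism you describe, so the proof stands.
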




\section{Optimal recovery in the numerical differentiation and the backward parabolic equation}  \label{ND&BPE}

Let's back to the problem of numerical differentiation (\ref{prex1}) (see Example \ref{ex1}).
From Theorem \ref{th12} it follows

\begin{theorem} \label{th13}
Let $\gamma>2$ in (\ref{prex1}) and $2\le q\le \infty$.
Then for any $\delta$ and $N$ such that
$$
N \asymp \Big(1/\delta\Big)^{1/\gamma}
$$
it holds true
$$
{R}_\delta (A_1, W, L_q) \asymp
R_{N,\delta}(A_1, W, L_q) \asymp
\delta^{(\gamma-2)/\gamma}
\asymp N^{-\gamma+2} .
$$
The indicated order-optimal estimates are achieved by the truncation method (\ref{tm1}), (\ref{tm2}).
\end{theorem}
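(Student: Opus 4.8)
The plan is to specialize the order-optimal equivalence of Theorem~\ref{th12} to the concrete spectral data of Example~\ref{ex1}, so that the only substantial work is, first, to check that these data fall under the hypotheses collected in Section~\ref{banach}, and second, to carry out the elementary asymptotic bookkeeping that converts the abstract quantity $\mu_N/\xi_N$ into the stated powers of $\delta$ and $N$. For the data at hand we have $\mu_{2k}=\mu_{2k+1}=(k+1)^2$, so that $\mu_N\asymp N^2$, while $\xi_N=\underline{N}^{\gamma}\asymp N^{\gamma}$; hence
$$
\frac{\mu_N}{\xi_N}\asymp N^{2-\gamma}=N^{-(\gamma-2)} .
$$
Since $\gamma>2$, this ratio tends to $0$, as required, and $\xi_k=\underline{k}^{\gamma}$ is of the form \eqref{slowly} with $c=1$, $\eta=\gamma$, $\alpha=0$ and any $\beta\in[0,1]$, so the admissibility of the coupling $\delta\asymp\xi_N^{-1}$ underlying Theorem~\ref{th12} is legitimate here.

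Next I would read off the scaling of $N$ in $\delta$. The coupling $1/\xi_N\asymp\delta$ means $N^{\gamma}\asymp 1/\delta$, that is $N\asymp(1/\delta)^{1/\gamma}$, which is precisely the hypothesis of the theorem. Substituting into the displayed ratio gives $\mu_N/\xi_N\asymp N^{-(\gamma-2)}\asymp(1/\delta)^{-(\gamma-2)/\gamma}=\delta^{(\gamma-2)/\gamma}$, and the two forms $\delta^{(\gamma-2)/\gamma}$ and $N^{-\gamma+2}$ are manifestly equivalent under this scaling. Feeding $\mu_N/\xi_N\asymp\delta^{(\gamma-2)/\gamma}$ into the conclusion $R_\delta(A_1,W,L_q)\asymp R_{N,\delta}(A_1,W,L_q)\asymp\mu_N/\xi_N$ of Theorem~\ref{th12} then yields the asserted chain of equivalences for every $2\le q\le\infty$, with the implied constants absorbed into $\asymp$; the optimality claim is inherited verbatim, since the upper bounds in Theorem~\ref{th12} are realized by the truncation method \eqref{tm1}, \eqref{tm2}.

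The step I expect to be the real obstacle is the verification that Example~\ref{ex1} genuinely satisfies the structural assumptions imposed in Section~\ref{banach}, because these were tailored to \emph{exponentially} growing spectra, whereas here $\mu_k$ grows only polynomially. Three points need care: the monotonicity of $\{\mu_k/\xi_k\}$, the first summability bound $(\sum_{k=0}^{N}\mu_k^2)^{1/2}\le c_1\mu_N$, and the tail bound $(\sum_{k=N}^{\infty}\mu_k^2/\xi_k^2)^{1/2}\le c_2\,\mu_N/\xi_N$. For the monotonicity I would lean on the explicit quotients displayed in the Remark following Theorem~\ref{th4}, which show that consecutive ratios $\frac{\mu_{N-1}/\xi_{N-1}}{\mu_N/\xi_N}$ stay bounded by $(N/(N-1))^{\gamma}$; this boundedness is what the error estimates actually use, and it already delivers the endpoint $q=2$ directly through Theorem~\ref{th9}, bypassing the summability constants altogether. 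For $2<q\le\infty$, however, the two summability inequalities must be scrutinized separately: with $\mu_k^2/\xi_k^2\asymp k^{4-2\gamma}$ the tail converges only once $\gamma$ is large enough, and even then the partial-sum asymptotics differ from a single top term by polynomial factors in $N$, so the existence and size of $c_1,c_2$ (and hence the precise range of $\gamma$ admissible at the sup-norm endpoint) are the delicate ingredients to pin down. Once those constants are secured, the remaining computation is exactly the routine substitution already sketched.
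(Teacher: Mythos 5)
Your route is the same as the paper's: the paper's entire proof of Theorem \ref{th13} is the one-line assertion that it follows from Theorem \ref{th12}, plus exactly the arithmetic you carry out ($\mu_N\asymp N^2$, $\xi_N=\underline{N}^{\gamma}$, hence $\mu_N/\xi_N\asymp N^{2-\gamma}\asymp\delta^{(\gamma-2)/\gamma}$ under the coupling $\xi_N\asymp 1/\delta$), and that arithmetic is correct. The problem is the step you flagged and postponed: it is not a constant-chasing technicality that can be "secured", because the structural hypotheses of Section \ref{banach} provably fail for Example \ref{ex1}. With $\mu_k\asymp k^2$ one has $\big(\sum_{k=0}^{N}\mu_k^2\big)^{1/2}\asymp N^{5/2}$ while $\mu_N\asymp N^2$, so no constant $c_1$ can give $\big(\sum_{k=0}^{N}\mu_k^2\big)^{1/2}\le c_1\mu_N$; likewise $\big(\sum_{k=N}^{\infty}\mu_k^2/\xi_k^2\big)^{1/2}\asymp N^{5/2-\gamma}$ when $\gamma>5/2$ (and the series diverges for $2<\gamma\le 5/2$), which is not $O(\mu_N/\xi_N)=O(N^{2-\gamma})$, so no $c_2$ exists either; moreover $\{\mu_k\},\{\xi_k\}$ are polynomially, not exponentially, increasing as Section \ref{banach} demands. (For $2<\gamma<4$ even literal monotonicity of $\mu_k/\xi_k$ fails at the even--odd transitions, e.g.\ $\mu_5/\xi_5<\mu_6/\xi_6$ for $\gamma=3$, though that part is harmless at the level of order estimates.) Hence Theorem \ref{th12} simply does not apply to $A_1$, and your proof cannot be completed along the proposed lines --- nor, for the same reason, can the paper's.

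What survives is precisely your $q=2$ fallback through Theorem \ref{th9} (with monotonicity of $\mu_k/\xi_k$ understood in order), which gives $R_\delta\asymp R_{N,\delta}\asymp\delta^{(\gamma-2)/\gamma}$ in $L_2$. But for $q=\infty$ the extra $N^{1/2}$ you worried about is real, not an artifact of Cauchy--Schwarz: since $\cos^2 jx+\sin^2 jx=1$, the worst-case noise error of the truncation method in $C$ equals $\delta\sup_t\big(\sum_{k<N}\mu_k^2w_k^2(t)\big)^{1/2}\asymp\delta N^{5/2}$, which under $N\asymp\delta^{-1/\gamma}$ is of order $\delta^{(2\gamma-5)/(2\gamma)}\gg\delta^{(\gamma-2)/\gamma}$. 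Worse still, the claimed rate is unattainable by \emph{any} method at $q=\infty$: taking $f=c\sum_{j\le m}\pi^{-1/2}\cos jx$ with $m\asymp\delta^{-1/\gamma}$ and $c\asymp\delta^{1+1/(2\gamma)}$ (so that $\|f\|_2\lesssim\delta$, $\|f\|_W\lesssim 1$) and $f^\delta=0$, the standard two-point argument gives $R_\delta(A_1,W,C)\gtrsim\|Af\|_\infty\gtrsim c\,m^3\asymp\delta^{(2\gamma-5)/(2\gamma)}$ for $\gamma>5/2$, while for $2<\gamma\le 5/2$ the class $W$ contains functions with essentially unbounded second derivative, so the sup-norm error is not even finite. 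In short, you correctly located the gap, but the honest conclusion is stronger than you state: it is a gap in the paper's own derivation of Theorem \ref{th13}, whose statement can hold as written only for a restricted range of $q$ (certainly $q=2$) or with a $q$-dependent rate, and cannot be rescued within the framework of Theorem \ref{th12}.
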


\vskip 2mm

\begin{remark}\rm
Earlier the problem of optimal recovery of derivatives of periodic functions with inaccurate input data was considered in
\cite{ZT}, \cite{SS}. Note that the orders of error in the best approximation of the derivatives are the same in these works,
including this study. However, in \cite{SS} the input data was understood to be the perturbed values of the Fourier coefficients. Therefore, Theorem \ref{th13} of this study generalizes result of \cite{SS} to the case of discrete information of arbitrary form.
On the other hand, Theorem \ref{th13} covers the corresponding result of \cite{ZT} for the case of a wider set of perturbed inputs.
\end{remark}

\vskip 2mm

Now consider the backward parabolic equation (\ref{prex2}) (see Example \ref{ex2}).
The next statement follows from Theorems \ref{th9} and \ref{th12}.

\begin{theorem} \label{th14}
Let $\lambda_k\asymp k^\gamma$, $0<\gamma\le 1$, in (\ref{prex2}) and $2\le q\le \infty$.
Then for any $t$, $0\le t< T$, any $\delta$ and $N$ such that
$$
N^{-\mu} e^{-\lambda_{N} T} \asymp \delta
$$
it holds true
$$
{R}_\delta (A_2, W, L_q) \asymp
R_{N,\delta}(A_2, W, L_q) \asymp
\delta^{(T-t)/T}
(\ln 1/\delta)^{-\frac{\mu t}{\gamma T}}
\asymp N^{-\mu} e^{-\lambda_{N} (T-t)} .
$$
For $t=T$ and any $N$ such that
 $$
N \asymp (\ln 1/\delta)^{1/\gamma}
 $$
it holds true
$$
{R}_\delta (A_2, W, L_q) \asymp
R_{N,\delta}(A_2, W, L_q)
\asymp (\ln 1/\delta)^{-\mu/\gamma}
\asymp N^{-\mu} .
$$
The indicated order-optimal estimates are achived by the truncation method (\ref{tm1}), (\ref{tm2}).
\end{theorem}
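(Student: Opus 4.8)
The plan is to apply the general order-optimality results, Theorems \ref{th9} and \ref{th12}, to the specific sequences arising from the backward parabolic equation in Example \ref{ex2}, and then to translate the abstract bound $\frac{\mu_N}{\xi_N}$ into the explicit rates in $\delta$ claimed in the statement. Recall from Example \ref{ex2} that $\mu_k = e^{\lambda_k t}$ and $\xi_k = \underline{k}^\mu e^{\lambda_k T}$, so that
$$
\frac{\mu_N}{\xi_N} = \underline{N}^{-\mu} e^{-\lambda_N(T-t)}.
$$
First I would verify that the standing hypotheses of Theorem \ref{th12} hold in this setting: that $\{\mu_k\}$ and $\{\xi_k\}$ are exponentially increasing (immediate from $\lambda_k \asymp k^\gamma \to \infty$), that $\{\frac{\mu_k}{\xi_k}\}$ is monotone decreasing, and that the two tail/partial-sum conditions with constants $c_1, c_2$ are satisfied. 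Because the ratios $\mu_k/\xi_k$ decay at least geometrically when $T>t$, these sums are dominated by a convergent geometric series and the required bounds follow with absolute constants; the uniform bound $\|w_k\|_C \le \chi$ holds for the trigonometric-type eigenbases relevant here. This justifies invoking
$$
R_\delta(A_2, W, L_q) \asymp R_{N,\delta}(A_2, W, L_q) \asymp \frac{\mu_N}{\xi_N}
$$
for $N = N(\delta)$ with $\frac{1}{\xi_N} \asymp \delta$.

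Next I would carry out the asymptotic inversion. Given the calibration $\frac{1}{\xi_N} \asymp \delta$, that is $N^{-\mu} e^{-\lambda_N T} \asymp \delta$, the task is to express $\frac{\mu_N}{\xi_N} = N^{-\mu} e^{-\lambda_N(T-t)}$ back in terms of $\delta$. Taking logarithms of $\delta \asymp N^{-\mu} e^{-\lambda_N T}$ gives
$$
\ln \frac{1}{\delta} = \lambda_N T + \mu \ln N + O(1) \asymp \lambda_N T,
$$
so $\lambda_N \asymp \frac{1}{T}\ln\frac{1}{\delta}$, and since $\lambda_N \asymp N^\gamma$ this yields $N \asymp \big(\ln\frac{1}{\delta}\big)^{1/\gamma}$. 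The factor $e^{-\lambda_N(T-t)} = \big(e^{-\lambda_N T}\big)^{(T-t)/T}$ is then, up to the polynomial correction, $\big(\delta N^\mu\big)^{(T-t)/T}$, while $N^{-\mu}$ contributes $\big(\ln\frac{1}{\delta}\big)^{-\mu/\gamma}$. Collecting the powers of $\ln\frac{1}{\delta}$ produces the exponent $-\frac{\mu t}{\gamma T}$ on the logarithmic factor, giving
$$
\frac{\mu_N}{\xi_N} \asymp \delta^{(T-t)/T}\big(\ln\tfrac{1}{\delta}\big)^{-\frac{\mu t}{\gamma T}},
$$
which is the first claimed rate. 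For the endpoint $t=T$ the exponential factor disappears, $\frac{\mu_N}{\xi_N} = N^{-\mu}$, and the same inversion gives $N \asymp \big(\ln\frac{1}{\delta}\big)^{1/\gamma}$, hence the rate $\big(\ln\frac{1}{\delta}\big)^{-\mu/\gamma}$; here one uses Theorem \ref{th9} directly since $\xi_k$ is of the form (\ref{slowly}) with $\beta = \gamma/\ldots$ appropriately matched.

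The main obstacle I anticipate is bookkeeping the logarithmic correction factor carefully so that the exponent $-\frac{\mu t}{\gamma T}$ comes out exactly, rather than with an incorrect coefficient. The subtlety is that both the polynomial prefactor $N^{-\mu}$ in $\xi_N$ and the calibration relation $\frac{1}{\xi_N}\asymp\delta$ feed the polynomial-in-$N$ corrections into the final power of $\ln\frac{1}{\delta}$, so one must track them simultaneously: substituting $N \asymp (\ln\frac1\delta)^{1/\gamma}$ into both $N^{-\mu}$ and into the $N^\mu$ hidden inside rewriting $e^{-\lambda_N(T-t)}$ in terms of $\delta$. A clean way to avoid errors is to write everything in terms of $\lambda_N$ first, establish $\lambda_N = \frac{1}{T}\ln\frac1\delta\,(1+o(1))$, and only at the end substitute $\lambda_N \asymp N^\gamma$; this isolates the genuinely exponential behaviour from the polynomial $\asymp$-corrections and makes the exponent of the logarithm transparent. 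The remaining verification that the auxiliary constants $c_1,c_2,\chi$ exist for this example is routine given the geometric decay, and poses no real difficulty.
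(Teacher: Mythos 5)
Your overall route is exactly the paper's: the paper proves Theorem \ref{th14} by simply invoking Theorems \ref{th9} and \ref{th12} for the sequences $\mu_k=e^{\lambda_k t}$, $\xi_k=\underline{k}^{\mu}e^{\lambda_k T}$ of Example \ref{ex2}, leaving the translation into rates in $\delta$ implicit. Your asymptotic inversion is the correct fleshing-out of that translation: from $N^{-\mu}e^{-\lambda_N T}\asymp\delta$ one gets $\ln(1/\delta)=\lambda_N T+\mu\ln N+O(1)\asymp\lambda_N T\asymp N^{\gamma}$, hence $N\asymp(\ln(1/\delta))^{1/\gamma}$, and
$$
\frac{\mu_N}{\xi_N}=N^{-\mu}e^{-\lambda_N(T-t)}\asymp N^{-\mu}\bigl(\delta N^{\mu}\bigr)^{(T-t)/T}=\delta^{(T-t)/T}N^{-\mu t/T}\asymp\delta^{(T-t)/T}\bigl(\ln(1/\delta)\bigr)^{-\mu t/(\gamma T)},
$$
which is the claimed rate and degenerates correctly at $t=T$.

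There is, however, a genuine gap in your verification of the hypotheses of Theorem \ref{th12}, at precisely the step the paper itself glosses over. You assert that the two summation conditions with constants $c_1,c_2$ hold because the ratios ``decay at least geometrically when $T>t$.'' That is false for $0<\gamma<1$: with $\lambda_k\asymp k^{\gamma}$ the terms $\mu_k^2/\xi_k^2=k^{-2\mu}e^{-2\lambda_k(T-t)}$ decay only stretched-exponentially, the ratio of consecutive terms tends to $1$, and an integral comparison (take $\lambda_k=k^{\gamma}$ for definiteness) gives
$$
\sum_{k\ge N}k^{-2\mu}e^{-2k^{\gamma}(T-t)}\asymp N^{1-\gamma}\,N^{-2\mu}e^{-2N^{\gamma}(T-t)},
$$
so $\bigl(\sum_{k\ge N}\mu_k^2/\xi_k^2\bigr)^{1/2}\asymp N^{(1-\gamma)/2}\,\mu_N/\xi_N$ and no constant $c_2$ independent of $N$ exists; the condition $\bigl(\sum_{k\le N}\mu_k^2\bigr)^{1/2}\le c_1\mu_N$ fails in the same way, and fails trivially at $t=0$, where $\mu_k\equiv 1$. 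Thus your geometric-series argument is valid only for $\gamma=1$. For $q=2$ this does not matter, since Theorem \ref{th9} requires no such conditions; but for $q>2$ and $\gamma<1$ the appeal to Theorem \ref{th12} needs repair---either by tracking the extra factor $N^{(1-\gamma)/2}$ through the upper bound of Theorem \ref{th11} (which then spoils the stated $\asymp$ by a factor $(\ln(1/\delta))^{(1-\gamma)/(2\gamma)}$) or by a different argument. This defect is shared by the paper's own one-line proof, but a proof cannot simply assert hypotheses that do not hold.
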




\vskip 2mm

\begin{remark}  \label{Rem1}
{\rm
It can be seen from Theorem \ref{th14} that the type of ill-posedness of the problem (\ref{prex2})
depends on the value of $t$.
If, for example, $t=T$, then the optimal order of accuracy for solving the problem is
$O((\ln 1/\delta)^{-\frac{\mu}{\gamma}})$.
Such problems in the theory of ill-posed problems are usually called severely ill-posed problems.
Further, in the case of $0< t< T$ the backward parabolic equation (\ref{prex2}) can be solved with the accuracy
$O(\delta^{(T-t)/T})$ in the power scale.
Such an error characterizes moderately ill-posed problems.
Finally, for $t=0$, we are dealing with a well-posed problem,
the optimal accuracy of the solution of which is $O(\delta)$.
}
\end{remark}

\vskip 2mm



\begin{remark}  \label{Rem2}
{\rm
For the first time, the optimal accuracy of solving backward parabolic problems for $t<T$ was found in \cite{T1998}.
Thus, Theorem  \ref{th14} of this study generalizes the result of \cite{T1998} to the case $t=T$.
In addition, Theorem  \ref{th14} generalizes the results of \cite{T1998} and \cite{MP}
to the case of any $L_q$, $2\le q\le \infty$.
}
\end{remark}

\vskip 2mm

\section*{Disclosure of Funding}
The second author has received funding through the MSCA4Ukraine project,
which is funded by the European Union (ID number 1232599).


\begin{thebibliography}{99}
\begin{small}

\bibitem{Epp}
K.\,A.~Ames, J.\,F.~Epperson,
A Kernel-based Method for the Approximate Solution of Backward Parabolic Problems,
SIAM J. Num. Anal., Vol 34, no. 4, 1997, pp. 1357--1390.

\bibitem{Ar96}
V.\,V.~Arestov, Approximation of unbounded operators by bounded operators and related extremal problems,
Russian Math. Surveys, 51:6 (1996), 1093--1126

\bibitem{Bak78}
A.\,B.~Bakushinsky,
Optimal and quasi-optimal methods for the solution of linear problems that are generated by regularizing algorithms.
Izv. Vyssh. Uchebn. Zaved. Mat.(1978), no.11, 6--10.

\bibitem{Kab}
Sergey I.~Kabanikhin,
Inverse and Ill-Posed Problems. Theory and Applications, in "Inverse and Ill-Posed Problems Series, V.55",
De Gruyter, 2011, 475 p.

\bibitem{LX}
J.~Liu, M.~Xiao,
Quasi-boundary value methods for regularizing the backward parabolic equation under the optimal control framework,
Inverse Problems, Vol.35 (12) (2019), 124003.

\bibitem{LGM}
G.~Lorentz, M.~Golitschek, Y.~Makovoz,
Constructive Approximation: Advanced Problems, Springer-Verlag, Berlin, 2011.

\bibitem{LNP}
S.~Lu, V.~Naumova, S.\,V.~Pereverzev,
Legendre polynomials as a recommended basis for numerical differentiation in the
presence of stochastic white noise
// J. Inverse Ill-Posed Probl.-- 2013.-- Vol.~21(2).-- P. 193--216.

\bibitem{Osip}
G.\,G.~Magaril-Il'yaev, K.\,Yu.~Osipenko,
Optimal recovery of the solution of the heat equation from inaccurate data,
Sb. Math., 200:5 (2009), 665--682.

\bibitem{Mathe&Pereverzev_2002_JAT}
P.~Mathe, S.\,V.~Pereverzev,
Stable summation of orthogonal series with noisy coefficients, {\it J. Approx. Theory} {\bf 118} (2002), 66--80.

\bibitem{MP}
P.~Mathe, S\,.V.~Pereverzev,
Complexity of linear ill-posed problems in Hilbert spaces,  Journal of Complexity. -- 2017. -- Vol. 38, P.50--67.

\bibitem{MR}
C.\,A.~Micchelli, T.\,J.~Rivlin, (1985)
Lectures on optimal recovery, in "Numerical Analysis, Lancaster 1984" (P. R. Turner, Ed.), pp. 21--93,
Lecture Notes in Mathematics, Vol 1129, Springer-Verlag, Berlin.

\bibitem{MS}
G.\,L.~Myleiko, S\,.G.~Solodky,
The minimal radius of Galerkin information for severely ill-posed problems, Journal of Inverse and Ill-Posed Problems.
-- 2014. -- Vol. 22(5). -- P. 739--757.

\bibitem{NPT}
M.\,T.~Nair, S.\,V.~Pereverzev, U.~Tautenhahn,
Regularization in Hilbert scales under general smoothing conditions,
Inverse Problems 21 (6) (2005), DOI 10.1088/0266-5611/21/6/003.

\bibitem{NW1}
E.~Novak, H.~Wozniakowski,
Tractability of Multivariate Problems,  volume I:
Linear Information, European Mathematical Society, 2008.

\bibitem{NW2}
E.~Novak, H.~Wozniakowski,
Tractability of Multivariate Problems, volume II:
Standard Information for Functionals, European Mathematical Society, 2008.

\bibitem{NW3}
E.~Novak, H.~Wozniakowski,
Tractability of Multivariate Problems, volume III:
Standard Information for Operators, European Mathematical Society, 20012.

\bibitem{PS}
S.\,V.~Pereverzev, S.\,G.~Solodky,
The minimal radius of Galerkin information for the Fredholm problem of the first kind,
Journal of Complexity 12 (4) (1996), 401--415.

\bibitem{Qian&Fu&Xiong&Wei_2006}
Qian~Z.,\
 Fourier truncation method for high order numerical derivatives
 /Z.~Qian,  C.L.~Fu,  X.T.~Xiong,  T.~Wei,
 // Appl. Math. Comput.-- 2006.-- Vol.~181(2).-- P. 940--948.

\bibitem{SW}
A.\,A.~Samarskii, Petr N.~Vabishchevich,
Numerical Methods for Solving Inverse Problems of Mathematical Physics,
in "Inverse and Ill-Posed Problems Series, V.52",   De Gruyter, 2007, 452 p.

\bibitem{Seid}
T.\,I.~Seidman,
Optimal filtering for the backward heat equation, SIAM J. Numer. Anal. 33 (1996), 162--170.

\bibitem{SSS_CMAM}
Semenova, Evgeniya V., Solodky, Sergiy G. and Stasyuk, Serhii A., Application of Fourier Truncation Method to Numerical
Differentiation for Bivariate Functions// Computational Methods in Applied Mathematics, vol. 22, no. 2, 2022, pp.
477-491. https://doi.org/10.1515/cmam-2020-0138

\bibitem{Solodky&Sharipov_2015_P116_124} S.\,G.~Solodky, K.\,K.~Sharipov,
Summation of smooth functions of two variables with perturbed Fourier coefficients,
    {\it J. Inverse Ill-Posed Probl.} {\bf 23}~(3) (2015), 287--297.

\bibitem{SS}
S.\,G.~Solodky, and S.\,A.~Stasyuk,
The minimal radius of Galerkin information for the problem of numerical differentiation
// Journal of Complexity. -- 2024. -- 81, 101787.

\bibitem{St}
S.\,B.~Stechkin,
Best approximation of linear operators,
Math. Notes.-- 1967.-- Vol.~1 (2). -- P. 91--99.

\bibitem{T1998}
U.~Tautenhahn,
Optimality for ill-posed problems under general source conditions
// Numer. Funct. Anal. and Optimiz.-- 1998.-- Vol.~19(3\&4).-- P. 377--398.

\bibitem{TS}
U.~Tautenhahn, T.~Schr\"{o}ter,
On optimal regularization methods for the backward heat equation,
Journal for Analysis and its Applications 15 (2) (1996), 475--493.

\bibitem{Tikh}
V.\,M.~Tikhomirov. Some Questions of Approximation Theory. Moscow University Press, 1976 (Russian).

\bibitem{TWW}
J.\,F.~Traub, G.\,W.~Wasilkowski, H.~Wozniakowski,
Information-Based Complexity, Academic Press, New York, 1988.

\bibitem{TW}
J.\,F.~Traub, H.~Wozniakowski, A General Theory of Optimal Algorithms, Academic Press, New York, 1980

\bibitem{VV86}
G.\,M.~Vainikko and A.\,Y.~Veretennikov,
Iteration Procedures in Ill-Posed Problems. Moscow: Nauka (In Russian), 1986.

\bibitem{Zhao_2010}
Z.~Zhao,
 A truncated Legendre spectral method for solving numerical differentiation
 //  International Journal of
Computer Mathematics.-- 2010.-- Vol.~87.-- P. 3209--3217.

\bibitem{ZT}
A.\,Zh.~ Zhubanysheva, and N.~Temirgaliyev,
Informative cardinality of trigonometric Fourier coefficients and their
limiting error in the discretization of a differentiation operator
in multidimensional Sobolev classes
// Computational Mathematics and Mathematical Physics.-- 2015.-- Vol.~55.-- P. 1432--1443.

\end{small}
\end{thebibliography}
\end{document}